\documentclass{article}
\usepackage[a4paper,top=2cm,bottom=2cm,left=3cm,right=3cm,marginparwidth=1.75cm]{geometry}     
\usepackage{fancyhdr}       
\usepackage{parskip}        
\usepackage{titlesec}       
\usepackage{tikz}           
\usetikzlibrary{shapes.geometric}
\usepackage{pgfplots}       
\usepackage{pgfplotstable}  
\usepackage{xcolor}         
\usepackage{amsmath}        
\usepackage{siunitx}        
\usepackage{listings}       
\PassOptionsToPackage{hyphens}{url} 
\usepackage{hyperref}       

\usepackage{cleveref}       
\usepackage{textcomp}       
\usepackage[english]{babel} 
\usepackage[labelfont=bf]{caption} 
\usepackage{subcaption}     
\usepackage{csquotes}       
\usepackage{verbatim}
\usepackage[backend=biber,style=numeric-comp,sorting=none,maxbibnames=99]{biblatex} 
\bibliography{bibliography.bib}

\makeatletter
\newcommand\setmaxbibnames[1]{\renewcommand\blx@maxbibnames{#1}}
\makeatletter

\usepackage{tabularx}       
\usepackage{booktabs}       
\newcolumntype{Y}{>{\centering\arraybackslash}X}
\usepackage{float}          
\usepackage{calc}           
\usepackage{blindtext}

\usepackage[version=4]{mhchem}
\usepackage{textcomp, gensymb}
\usepackage{amssymb, amsthm, amsfonts}
\usepackage{pdfpages}
\usepackage{plimsoll}
\usepackage{longtable}
\usepackage{graphicx}
\usepackage[inkscapelatex=false]{svg}
\usepackage{mathtools}

\newtheorem{lem}{Lemma}
\newtheorem{theorem}{Theorem}
\newtheorem{cor}{Corollary}
\newtheorem{assum}{Assumption}
\newtheorem{remark}{Remark}

\DeclareMathOperator{\sign}{sign}
\DeclareMathOperator{\sat}{sat}

\usepackage{authblk}
\title{\textbf{Graph-Convolutional Reinforcement Learning for Finite-Time Stabilization of a Chaotic Chua Circuit via Unified Flexible Terminal Sliding-Mode Control}}

\author[1,*]{Hooman Fatoorehchi}
\author[2]{Majid Roohi}
\author[1]{Rasmus Kold}
\author[3]{Matthias Ehrhardt}
\author[2]{Andreas Basse-O’Connor}
\author[1]{Sebastian Nis Bay Villadsen}
\author[1]{Philip L. Fosbøl}
\affil[1]{Center for Energy Resources Engineering (CERE), Department of Chemical and Biochemical Engineering, Technical University of Denmark, Søltofts Plads Building 228A, Kongens Lyngby, 2800, Denmark.}

\affil[2]{Department of Mathematics, Aarhus University, 8000 Aarhus, Denmark.}

\affil[3]{Chair of Applied and Computational Mathematics, University of Wuppertal, Wuppertal, Germany.}

\affil[ ]{$^*$\textit{Corresponding author.} 
Email: hoofa@dtu.dk}
\date{}
\begin{document}
\maketitle
\begin{abstract}
This paper introduces a graph convolutional reinforcement learning framework for the finite-time stabilization of chaotic dynamics in a physical Chua circuit. 
The circuit is modeled directly in terms of capacitor voltages, inductor current, and physical component parameters. 
This preserves the connection between the mathematical model and the underlying electrical system. 
A unified, flexible terminal sliding-mode controller is developed to achieve stabilization within a finite time in the presence of uncertainties in the lumped model, external disturbances, and bounded control inputs. 
The controller incorporates state-dependent nonlinear exponents to provide flexible convergence characteristics while avoiding singular terms near equilibrium. 
To minimize dependence on manual controller tuning, the admissible controller parameters are optimized using an actor-critic reinforcement learning scheme with a graph convolutional network that leverages the circuit dynamics' coupling structure. 
The learned parameters are constrained to the ranges required by the theoretical stability conditions. Numerical experiments demonstrate the rapid suppression of chaotic oscillations and the finite-time convergence of the state and sliding variables while respecting the prescribed input-saturation limits. 
Compared to a reference control method, the proposed approach reduces the IAE, ISE, ITAE, and ITSE by 56.27\,\%, 56.40\,\%, 61.69\,\%, and 64.70\,\%, respectively.
These results suggest that graph-based reinforcement learning can effectively tune parameters while retaining the robustness and analytical structure of the underlying sliding-mode controller.

\end{abstract}
\noindent
\textbf{Keywords:}
Chua circuit; chaotic systems; finite-time control; sliding-mode control; reinforcement lear\-ning; graph convolutional networks.
%
\section{Introduction}
Chaos is a distinctive phenomenon in nonlinear dynamical systems, characterized by deterministic yet highly irregular behavior and strong sensitivity to initial conditions.
Since the pioneering observations of Lorenz, chaotic dynamics have attracted considerable attention across mathematics, physics, and engineering. 
Their complex evolution and limited long-term predictability have made the analysis and characterization of chaotic systems an important area of nonlinear dynamics \cite{rasband2015chaotic}.
The distinctive properties of chaotic dynamics have also been exploited in a wide range of practical applications, including secure communication and encryption, signal processing, electronic circuits, robotics, power systems, and chemical processes. 
While chaotic behavior can be deliberately exploited in applications such as information security and signal generation, 
it may also lead to undesirable oscillations and degraded performance in engineering systems. 
This dual role has motivated extensive research on both the utilization and effective control of chaotic dynamics \cite{jaiswal2024}.
Chua's circuit remains one of the most extensively studied physical realizations of chaos, particularly because of its simple electronic structure and rich nonlinear behavior, including the well-known double-scroll attractor \cite{chua1994a, kennedy1992a}.
Its dynamics are commonly represented through normalized state-space equations, which provide a compact mathematical description and facilitate nonlinear analysis. 
However, normalization partly removes the direct correspondence between the model variables and the electrical quantities of the original circuit, making the influence of individual circuit components less transparent.

A circuit-level representation offers a different perspective by preserving the actual electrical states and component parameters. 
In this formulation, the capacitor voltages $V_{C_1}$ and $V_{C_2}$ and the inductor current $i_L$ remain the system states, while $C_1$, $C_2$, $L$, and $R_P$ explicitly appear in the governing equations. The nonlinear resistor is also described through its physical slopes and breakpoint voltage. 
This representation therefore maintains a direct link between the observed chaotic dynamics and the underlying circuit configuration, allowing the influence of component values and parameter variations on the system behavior to be examined more explicitly.
Stabilizing chaotic dynamics is a challenging control problem because strong nonlinearities and sensitivity to perturbations can significantly affect both transient and steady-state behavior. 
Various strategies, including Ott-Grebogi-Yorke (OGY) control, active control, adaptive and backstepping schemes, model predictive control, intelligent control methods, and sliding mode control, have been developed to regulate chaotic systems toward prescribed equilibria or trajectories \cite{Truong, Agha1, agha2, Azioune}.
Their practical effectiveness, however, remains closely tied to robustness against uncertain dynamics and disturbances, convergence characteristics, model dependence, and constraints imposed on the control input.

\textit{Sliding mode control} (SMC) provides a robust framework for regulating nonlinear and chaotic dynamics through the construction of an invariant sliding manifold.
Its low sensitivity to matched uncertainties and disturbances has supported its extensive use in chaos stabilization and synchronization \cite{SMC}.
In recent years, SMC has received increasing attention, with various control schemes developed to address different nonlinear systems and engineering applications. 
For example, in \cite{10403641}, an adaptive SMC reaching law was developed for 
PMSM speed regulation, with the main focus on reaching speed and chattering reduction. 
Gedefaw, Abdissa, and Lemma \cite{gedef}
developed a super-twisting SMC with a fuzzy PID surface for quadrotor trajectory tracking, where fuzzy logic was employed for online tuning of the controller gains.
In \cite{SAK}, a high-order SMC tuned by reinforcement learning was designed for a three-level NPC inverter, enabling real-time gain adjustment under varying grid conditions and LVRT operation.
An adaptive SMC combined with radial basis function (RBF) neural networks was designed in \cite{FENG} for robotic excavators, using the neural network to estimate and compensate for unknown system dynamics.
For mobile manipulator systems, Ref.~\cite{meny} proposed a backstepping fuzzy SMC whose control parameters were optimized using PSO for simultaneous end-effector and platform motion tracking.
Ref.~\cite{ram} investigated two SMC schemes for the chaotic Belousov–Zhabotinsky (BZ) reaction, employing smoothed and adaptive fuzzy structures to address tracking performance and chattering.
In \cite{lan}, a data-driven SMC framework was developed for MIMO nonlinear systems with partially unknown dynamics, combining a robust reaching controller with an SDP-based nominal controller.

Despite these developments, some limitations remain in the existing SMC literature: 
(1) the performance of several SMC schemes is highly dependent on controller parameter selection, which may require considerable tuning effort; 
(2) actuator saturation is not explicitly incorporated into the design of some existing controllers, despite its practical importance in physical systems; 
(3) certain SMC approaches still rely on sufficiently accurate knowledge of the system dynamics, which may restrict their applicability when model information is incomplete; 
and (4) although intelligent optimization techniques have increasingly been combined with SMC, the use of deep and graph-based learning for controller parameter optimization remains relatively limited.

To address these issues, this study develops a graph-learning-based \textit{unified flexible terminal sliding mode control} (UFT-SMC) framework for the finite-time stabilization of chaotic dynamics. 
Unlike the commonly adopted normalized Chua models, the controller is implemented on a circuit-level representation in which the capacitor voltages, inductor current, and physical circuit parameters are retained explicitly. 
The control formulation accounts for lumped uncertainties, bounded disturbances, and input saturation, while a GCN-based learning scheme is employed to optimize the controller parameters based on their interdependencies and closed-loop performance.

Accordingly, the main contributions are summarized as follows: 
(1) a physical Chua circuit model is considered to preserve the direct connection between chaotic dynamics and actual electrical quantities; 
(2) a UFT-SMC scheme is developed for finite-time stabilization under uncertain dynamics and constrained control inputs; 
(3) a GCN-based framework is introduced for systematic optimization of the controller parameters; 
and (4) the proposed approach is assessed through convergence, control effort, integral performance indices, and comparative control studies.

The remainder of this paper is organized as follows. 
Section~\ref{sec:chua} presents the physical Chua circuit model and its dynamical characteristics. Section~\ref{sec:3} develops the proposed UFT-SMC and establishes the corresponding stability and finite-time convergence results. 
Section~\ref{sec:stochastic} presents the stochastic robustness analysis of the proposed control method.
Section~\ref{sec:5} introduces the graph-learning-based parameter optimization framework. 
Section~\ref{sec:6} presents the numerical results and comparative evaluations. 
Finally, Section ~\ref{sec:7} concludes the paper. 

\section{The Chua circuit}\label{sec:chua}

This section presents the Chua circuit model and its physical parameters, 
followed by an analysis of its dynamical behavior and Lyapunov exponents.
\subsection{Circuit Description and Mathematical Model}
Consider the physical Chua circuit with capacitor voltages $V_{C_1}$ and $V_{C_2}$ and inductor current $i_L$ as its state variables. The corresponding circuit-level dynamics are expressed as \cite{kennedy1992a}
\begin{equation}\label{Chua_1}
\left\{
\begin{aligned}
C_1\dot{V}_{C_1} &= \frac{1}{R_P} (V_{C_2}-V_{C_1})-f(V_{C_1}),\\
C_2\dot{V}_{C_2} &= \frac{1}{R_P} (V_{C_1}-V_{C_2})+i_L,\\
L\dot{i}_L &= -V_{C_2}.
\end{aligned}
\right.
\end{equation}
Here, $C_1$ and $C_2$ denote the capacitances, $L$ is the inductance, and $R_P$ is the variable resistance. 
The nonlinear current-voltage characteristic is described by
\begin{equation*}
    f(V_{C_1}) = m_0 V_{C_1} + \frac{1}{2}(m_1-m_0) \bigl( |V_{C_1}+B_P| - |V_{C_1}-B_P| \bigr),
\end{equation*}
where $m_0$ and $m_1$ are the slopes of the linear regions and $B_P$ denotes the breakpoint voltage.

The physical realization of the circuit is shown in Figure~\ref{fig:diagramSetup}. 
The nonlinear resistor $N_R$, corresponding to Chua's diode, is implemented using two operational amplifiers.

\begin{figure}[htb!]
    \centering
    \includegraphics[width=0.9\textwidth]{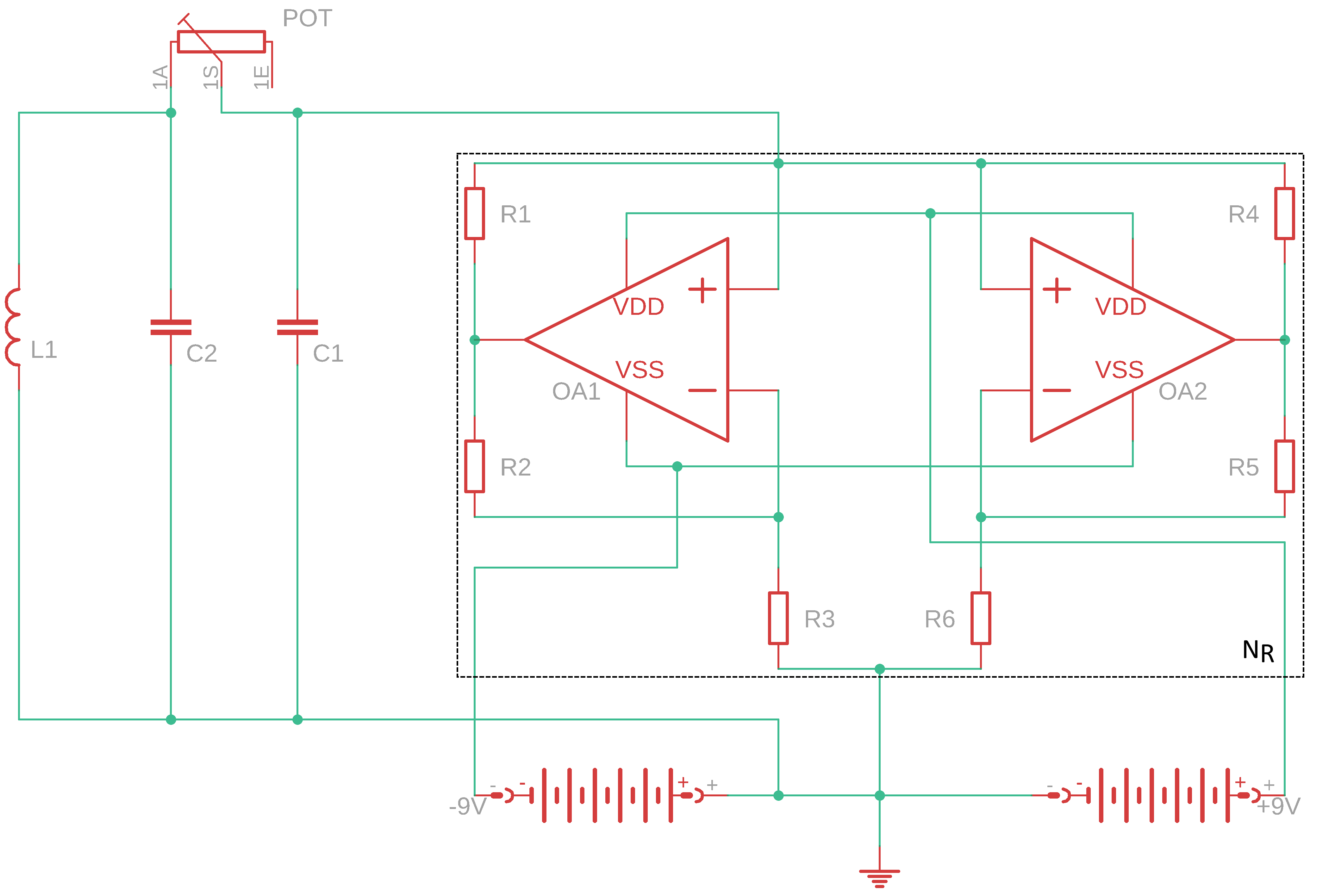}
    \caption{Electrical schematic of the physical Chua circuit.}
    \label{fig:diagramSetup}
\end{figure}

Depending on the resistor configuration, $N_R$ can exhibit three or five linear regions within the voltage range defined by the operational-amplifier saturation level $E_{\sat}$. For $R_1=R_2$, $R_4=R_5$, and sufficiently large operational-amplifier gain, the breakpoint voltages are given by
\begin{equation*}
   \pm B_P^+ = \frac{R_3}{R_2+R_3}E_{\sat}, \qquad
   \pm B_P^- = \frac{R_6}{R_5+R_6}E_{\sat}.
\end{equation*}
When $R_2=R_5$ and $R_3=R_6$, the breakpoint levels coincide, resulting in a three-region piecewise-linear characteristic. 
The nominal component values of the circuit are listed in Table~\ref{tab:components}.

\begin{table}[ht!]
    \centering
        \caption{Nominal component values of the Chua circuit.}
    \label{tab:components}
    \begin{tabular}{@{}llll@{}}
    \toprule
    Component & Description & Value & Tolerance \\ \midrule
    $POT$ & Potentiometer & 2.5 k$\Omega$ & $\pm5\%$ \\
    $C_1$ & Capacitor & 10 nF & $\pm5\%$ \\
    $C_2$ & Capacitor & 100 nF & $\pm5\%$ \\
    $L$ & Inductor & 20 mH & $\pm5\%$ \\
    $OA_1$ & Operational amplifier & -- & $\pm5\%$ \\
    $R_1$ & Resistor & 220 $\Omega$ & $\pm5\%$ \\
    $R_2$ & Resistor & 220 $\Omega$ & $\pm5\%$ \\
    $R_3$ & Resistor & 2.2 k$\Omega$ & $\pm5\%$ \\
    $OA_2$ & Operational amplifier & -- & $\pm5\%$ \\
    $R_4$ & Resistor & 22 k$\Omega$ & $\pm5\%$ \\
    $R_5$ & Resistor & 22 k$\Omega$ & $\pm5\%$ \\
    $R_6$ & Resistor & 3.3 k$\Omega$ & $\pm5\%$ \\ \bottomrule
    \end{tabular}
\end{table}

\subsection{Chaotic Dynamics and Lyapunov Analysis}
The dynamical behavior of the Chua circuit is strongly influenced by the variable resistance $R_P$. 
For the component values listed in Table~\ref{tab:components}, the circuit exhibits irregular oscillatory behavior at $R_P=1800\,\Omega$. 
The corresponding state responses and phase-space trajectory are shown in Figure~\ref{fig}. 
Figures~\ref{fig}(a)--(c) present the time responses of $V_{C_1}$, $V_{C_2}$, and $i_L$, respectively, 
while Figure~\ref{fig}(d) shows the resulting three-dimensional attractor. 
The double-scroll structure observed experimentally using an oscilloscope is shown in Figure~\ref{fig}(e).
The experimentally observed attractor is used here to validate the chaotic operating regime of the physical circuit. 
The subsequent closed-loop control results are obtained numerically; experimental closed-loop validation is beyond the scope of the present study.


\begin{figure}[ht!]
\centering
\includegraphics[width=0.82\linewidth]{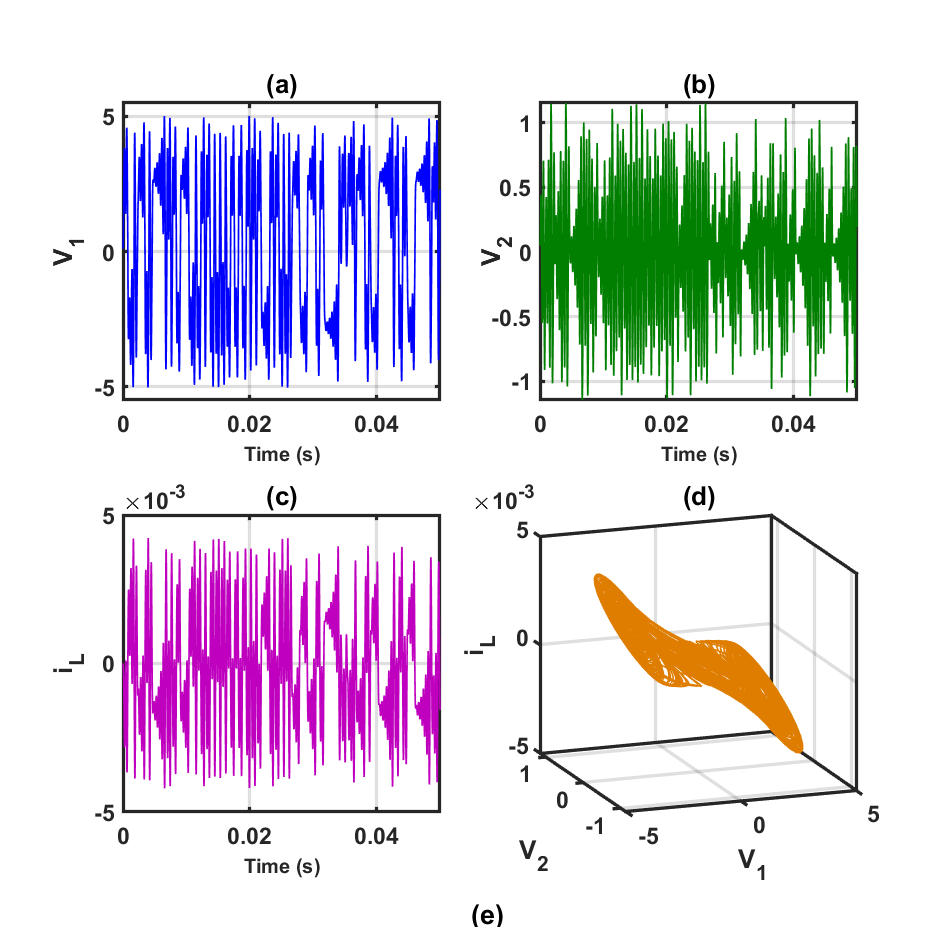}\\[-1pt]
\includegraphics[width=0.7\linewidth]{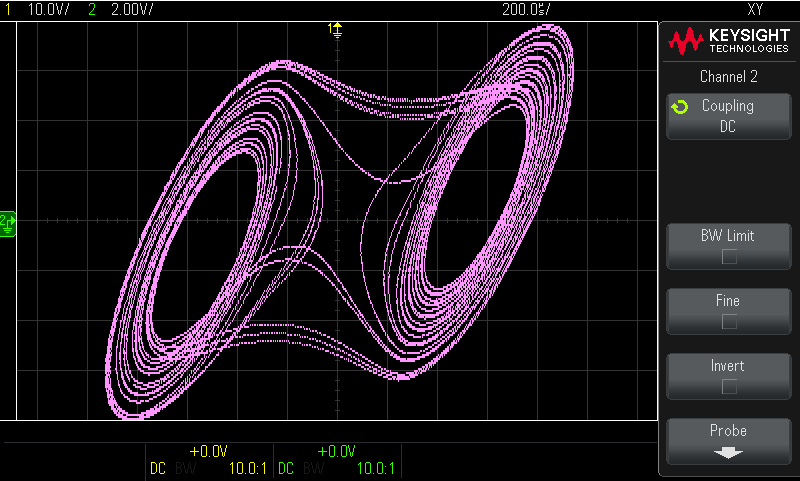}
\caption{Dynamical behavior of the Chua circuit at $R_P=1800\,\Omega$: (a)--(c) time responses of $V_{C_1}$, $V_{C_2}$, and $i_L$, respectively; (d) three-dimensional phase-space trajectory; and (e) experimentally observed double-scroll attractor.}
\label{fig}
\end{figure}

To quantitatively characterize the observed chaotic behavior, the Lyapunov
exponents are evaluated from the evolution of infinitesimal perturbations along
the system trajectory. For a nonlinear system 
    $\dot{\mathbf{x}}=\mathbf{F}(\mathbf{x})$,
the perturbation dynamics are governed by
\begin{equation}
    \delta\dot{\mathbf{x}} = J(\mathbf{x})\,\delta\mathbf{x},\qquad
    J(\mathbf{x}) = \frac{\partial \mathbf{F}}{\partial \mathbf{x}},
\end{equation}
where $J(\mathbf{x})$ denotes the Jacobian matrix.
The \textit{Lyapunov exponents} are defined as
\begin{equation}
    \lambda_i = \lim_{t\to\infty}
    \frac{1}{t}\ln\biggl(\frac{\|\delta\mathbf{x}_i(t)\|}
         {\|\delta\mathbf{x}_i(0)\|}\biggr),\qquad i=1,2,3.
\end{equation}

Because the circuit is formulated in physical units, the Lyapunov exponents are reported on the dimensionless time scale adopted in the numerical analysis. 
Let $\tau = t/T_0$ denote the dimensionless time associated with the numerical characterization, where $T_0$ is the chosen reference time scale.
If $\lambda_i$ are the Lyapunov exponents with respect to physical time $t$, then the corresponding exponents with respect to $\tau$ are
    $\Lambda_i=T_0\lambda_i$, $i=1,2,3$,
i.e.\ only the Lyapunov analysis uses the normalized time coordinate, while the control model remains in physical variables.

At $R_P=1800~\Omega$, the resulting dimensionless Lyapunov spectrum is
\begin{equation}
    \Lambda_1=0.43651023,\qquad
    \Lambda_2=0.00017188,\qquad
    \Lambda_3=-3.35794381.
\end{equation}
The presence of one positive exponent, together with an exponent close to zero and a negative exponent,
is consistent with the chaotic behavior observed in the phase-space trajectories.

The evolution of the dimensionless Lyapunov spectrum and the bifurcation behavior are shown in Figure~\ref{fig:LE_bifurcation}. 
As seen in Figure~\ref{fig:LE_bifurcation}(a), the three exponents settle after the initial transient, with $\Lambda_1$ remaining positive. 
Figure~\ref{fig:LE_bifurcation}(b) shows the local maxima of $V_{C_1}$ as $R_P$ is varied, revealing changes in the long-term dynamics of the circuit. 
The dashed line marks $R_P=1800\,\Omega$, which is used as the operating point in the subsequent control study.

\begin{figure}[ht!]
    \centering
    \includegraphics[width=\linewidth]{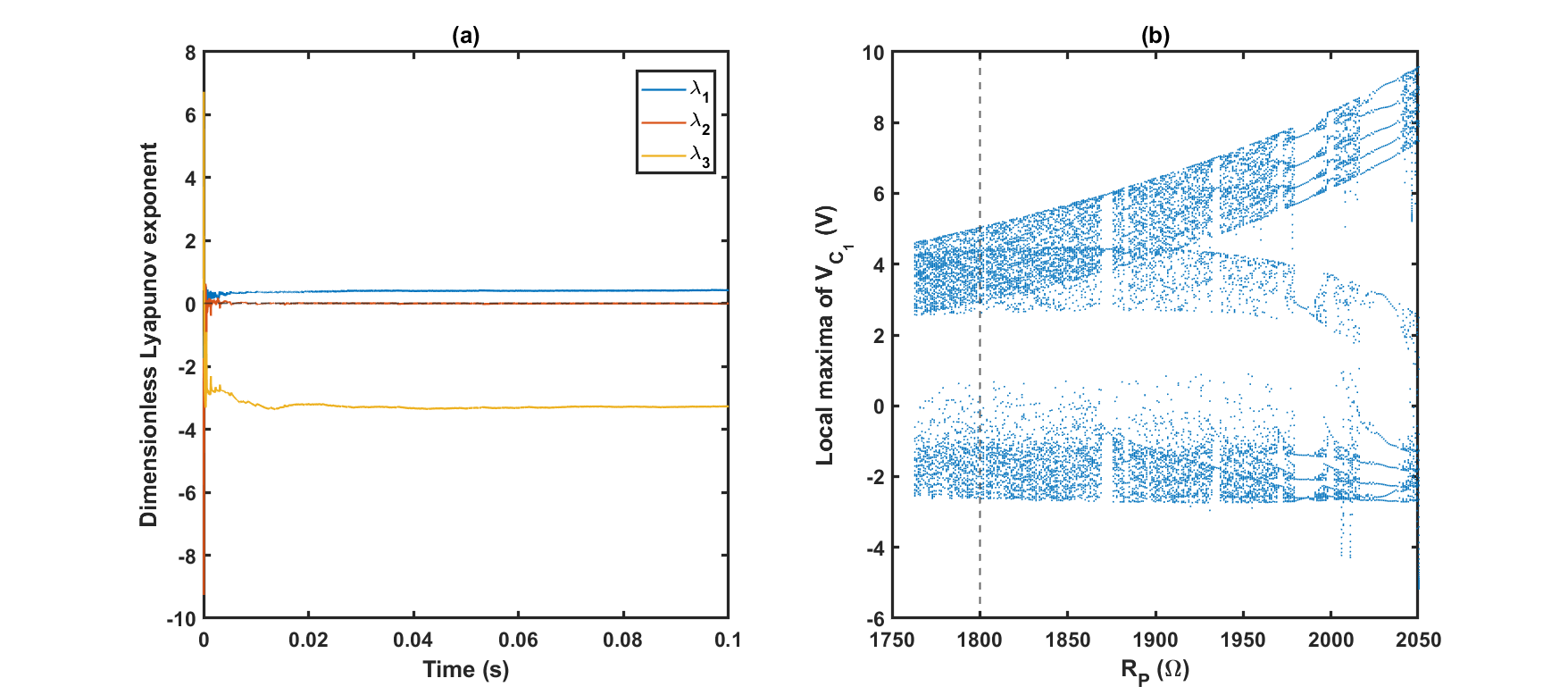}
    \caption{Dynamical characterization of the Chua circuit:
    (a) evolution of the dimensionless Lyapunov exponents at
    $R_P=1800~\Omega$; (b) bifurcation diagram based on the local maxima
    of $V_{C_1}$ with respect to $R_P$. The dashed line in (b) marks the
    operating point $R_P=1800~\Omega$.}
    \label{fig:LE_bifurcation}
\end{figure}

\section{Design of the control method}\label{sec:3}
Here, the Chua circuit is first reformulated into a control-orientated state-space representation accounting for practical system effects. 
The proposed \textit{unified flexible terminal sliding mode control} (UFT-SMC) is then developed for this formulation.
\subsection{The control system}
To facilitate the controller design, the Chua chaotic circuit is reformulated into a state-space representation by defining
\begin{equation*}
    x_1=V_{C_1}, \qquad x_2=V_{C_2}, \qquad x_3=i_L,
\end{equation*}
where $x_1$ and $x_2$ denote the voltages across capacitors $C_1$ and $C_2$, respectively, and $x_3$ represents the inductor current. 
Accordingly, the circuit equations can be rewritten as
\begin{equation} \label{Chua_2}
\left\{
\begin{aligned}
  \dot{x}_1 &= a(x_2-x_1)-\frac{1}{C_1}f(x_1),\\
  \dot{x}_2 &= b(x_1-x_2)+cx_3,\\
  \dot{x}_3 &= -dx_2. 
\end{aligned}
\right.
\end{equation}
where
\begin{equation*}
  a=\frac{1}{R_PC_1}, \qquad b=\frac{1}{R_PC_2}, \qquad
   c=\frac{1}{C_2}, \qquad d=\frac{1}{L},
\end{equation*}
and
\begin{equation*}
    f(x_1) = m_0 x_1 + \frac{1}{2} (m_1 - m_0)\, (|x_1 + B_P|-|x_1-B_P|).
\end{equation*}
The controlled Chua system \eqref{Chua_2}, accounting for input saturation and lumped uncertainties, is written as
\begin{equation}\label{system}
    \dot{x}_i = g_i(x) + \eta_i(x,t) + \phi(u_i), \qquad i=1,2,3,
\end{equation}
where
\begin{equation*}
  g_1(x)=a(x_2-x_1)-\frac{1}{C_1}f(x_1), \qquad 
  g_2(x)=b(x_1-x_2)+cx_3, \qquad
   g_3(x)=-dx_2.
\end{equation*}
Here, $\eta_i(x,t)=\Delta g_i(x,t)+d_i(t)$ denotes the \textit{lumped uncertainty}, incorporating model uncertainties and external disturbances. 
Furthermore, $u_i$ denotes the control input,
while the \textit{saturated control signal} is defined as
\begin{equation}\label{sat}
   \phi(u_i)=\sat(u_i),
\end{equation}
where
\begin{equation*}
\sat(u_i(t))=
\begin{cases}
u_i^l, & u_i(t)\le u_i^l,\\
u_i(t), & u_i^l<u_i(t)<u_i^r,\\
u_i^r, & u_i(t)\ge u_i^r,
\end{cases}
\qquad i=1,2,3.
\end{equation*}
Here, $u_i^r\in\mathbb{R}^+$ and $u_i^l\in\mathbb{R}^-$ denote the
upper and lower saturation limits, respectively. 
The \textit{saturation-induced input mismatch} is defined as
\begin{equation}\label{sat_error}
\Delta u_i(t)=\sat(u_i(t))-u_i(t),
\end{equation}
and hence
\begin{equation}\label{sat_decomp}
\phi(u_i)=u_i+\Delta u_i.
\end{equation}
\begin{assum}
 The states of the Chua chaotic system evolve within a bounded attractor in the phase space, as commonly observed in chaotic dynamics \cite{curran1997absolute, fradkov2005control}. 
 Therefore, the nonlinear functions $g_i(x)$ are assumed to be bounded, i.e., there exist positive constants $\xi_i$ such that
\begin{equation*}
   |g_i(x)| \le \xi_i, \qquad i=1,2,3.
\end{equation*}
Moreover, the lumped uncertainty term $\eta_i(x,t)$, which includes both model uncertainties and external disturbances, is assumed to be bounded. Hence, there exist positive constants $k_i$ such that
\begin{equation}\label{limit_sys_2}
   |\eta_i(x,t)| \le k_i, \qquad i=1,2,3.
\end{equation}
Therefore,
\begin{equation} \label{limit_sys}
    |g_i(x)| + |\eta_i(x,t)| \le \sigma_i, \qquad i=1,2,3.
\end{equation}
In addition, to ensure practical implementability of the controller, 
the saturation nonlinearity and its associated mismatch term are also assumed to be bounded. 
That is, there exist positive constants $\xi_i$ such that
\begin{equation}\label{limit_sat}
   |\Delta u_i(t)| \le \lambda_i, \qquad i=1,2,3.
\end{equation}
\end{assum}

The following finite-time stability result will be used in the subsequent convergence analysis. 
\begin{lem}\label{lem:finite_time}
Let $V(t)$ be a continuous positive-definite function satisfying
\begin{equation*}
    \dot{V}(t)\le -qV^{\rho}(t),  \qquad q>0,\quad 0<\rho<1.
\end{equation*}
Then, $V(t)$ reaches zero in finite time, with the settling time bounded by
\begin{equation*}
    T_s\le t_0+ \frac{V^{1-\rho}(t_0)}{q(1-\rho)}.
\end{equation*}
\end{lem}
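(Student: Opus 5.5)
The plan is to compare $V$ with the solution of the scalar differential equation $\dot y=-qy^{\rho}$ by working with the transformed quantity $W(t)=V^{1-\rho}(t)$, which turns the nonlinear differential inequality into one that is linear in $t$. Throughout, $V$ is taken to be absolutely continuous (for instance, a Lyapunov function evaluated along Filippov solutions), so that the inequality holds almost everywhere and can be integrated. Only the case $V(t_0)>0$ needs work; if $V(t_0)=0$ the claim is trivial. First we show that once $V$ reaches zero it stays there. If $V(t_1)=0$, then on any interval after $t_1$ on which $V>0$ we would have $\dot V<0$. Together with $V\ge 0$ and continuity, this rules out any such excursion.

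On a maximal interval $[t_0,T)$ on which $V(t)>0$, the function $W=V^{1-\rho}$ is absolutely continuous, since $s\mapsto s^{1-\rho}$ is locally Lipschitz on $(0,\infty)$. By the chain rule,
\begin{equation*}
\dot W(t)=(1-\rho)V^{-\rho}(t)\dot V(t)\le -q(1-\rho)\qquad \text{for a.e. } t\in[t_0,T).
\end{equation*}
Integrating from $t_0$ to $t$ gives
\begin{equation*}
0\le W(t)\le V^{1-\rho}(t_0)-q(1-\rho)(t-t_0).
\end{equation*}
This can only hold while $t-t_0\le V^{1-\rho}(t_0)/\bigl(q(1-\rho)\bigr)$. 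Hence $T$ is finite and satisfies
\begin{equation*}
T\le t_0+\frac{V^{1-\rho}(t_0)}{q(1-\rho)}.
\end{equation*}
By maximality of the interval and continuity of $V$, we get $V(T)=0$. The first step then gives $V(t)=0$ for all $t\ge T$, so the settling time satisfies $T_s=T$ and obeys the stated bound.

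The main obstacle is regularity at $V=0$, where $V^{-\rho}$ blows up and $V^{1-\rho}$ is not Lipschitz. We handle this by running the chain-rule argument only on the open set where $V>0$, and by treating the invariance of zero separately.
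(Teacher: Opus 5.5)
Your proof is correct and follows the standard route. The paper states Lemma~\ref{lem:finite_time} without proof, but the computation it uses in the proof of Theorem~\ref{thm:3} is exactly your substitution $W=V^{1-\rho}$: it computes $\frac{d}{dt}V^{1-\beta}=(1-\beta)V^{-\beta}\dot V\le-\gamma_4(1-\beta)$ where $V>0$ and integrates pathwise. Your version is more careful than the paper's: you make the absolute-continuity assumption explicit (it is genuinely needed if the inequality holds only almost everywhere), restrict the chain rule to the set where $V>0$, and separately show that zero is invariant, so the first hitting time really is the settling time.
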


\subsection{The UFT-SMC control method}
As a first step toward stabilizing the chaotic Chua system, a flexible SMC strategy is adopted, and the following sliding surface is designed. 
The \textit{sliding surface} is defined as
\begin{equation}\label{SS}
 s_i(t) = |x_i(t)|+\mu_i x_i(t)+\int_0^t \Bigl(\gamma_1 |x_i(\tau)|^\omega+\gamma_2 |x_i(\tau)|^{p+\sign(x_i(\tau))}+\gamma_3\Bigr)\sign(x_i(\tau))\,d\tau,
\end{equation}
where 
$0<\omega<1$, $1<p<2$,
$\gamma_1$, $\gamma_2$, and $\gamma_3$ are positive constants and $\mu_i>1$.
On the sliding manifold $s_i(t)=0$, we have $\dot{s}_i(t)=0$, which gives
\begin{equation*}
   \dot{s}_i(t)=\frac{d}{dt}|x_i(t)|+\mu_i \dot{x}_i(t)
   +\Bigl(\gamma_1 |x_i(t)|^\omega +\gamma_2 |x_i(t)|^{p+\sign(x_i(t))} +\gamma_3\Bigr)\sign(x_i(t))=0.
\end{equation*}
Since $\frac{d}{dt}|x_i(t)|=\sign(x_i(t))\dot{x}_i(t)$, the above equation becomes
\begin{equation}\label{sdot}
  \dot{s}_i(t)=\sign(x_i(t))\dot{x}_i(t)+\mu_i \dot{x}_i(t)
  +\Bigl(\gamma_1 |x_i(t)|^\omega+\gamma_2 |x_i(t)|^{p+\sign(x_i(t))}+\gamma_3\Bigr) \sign(x_i(t))=0.
\end{equation}

Thus, the equivalent dynamics on the sliding surface is obtained as
\begin{equation}\label{SD}
\dot{x}_i(t) =
-\frac{\left(\gamma_1 |x_i(t)|^\omega+\gamma_2 |x_i(t)|^{p+\sign(x_i(t))}+\gamma_3\right)\sign(x_i(t))}{\sign(x_i(t))+\mu_i}.
\end{equation}

\begin{theorem}\label{thm:1}
Consider the equivalent sliding dynamics in \eqref{SD}. 
Under the proposed sliding surface \eqref{SS}, the system states $x_i(t)$ converge to the origin in finite time once the sliding manifold is reached.
\end{theorem}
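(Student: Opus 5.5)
The plan is to apply Lemma~\ref{lem:finite_time} to the scalar Lyapunov candidate $V_i=|x_i|$, or equivalently $V_i=\tfrac12 x_i^2$, along the equivalent sliding dynamics \eqref{SD}, treating each $i=1,2,3$ separately since \eqref{SD} is decoupled. For $x_i\neq 0$ we have $\sign(x_i)\in\{-1,1\}$. Since $\mu_i>1$, the denominator satisfies $\sign(x_i)+\mu_i\in\{\mu_i-1,\ \mu_i+1\}$, which is strictly positive. This is the reason for the assumption $\mu_i>1$: it keeps \eqref{SD} nonsingular and sign-preserving. Multiplying \eqref{SD} by $\sign(x_i)$ then gives, for $x_i\neq0$,
\begin{equation*}
\frac{d}{dt}|x_i| = -\frac{\gamma_1|x_i|^\omega+\gamma_2|x_i|^{p+\sign(x_i)}+\gamma_3}{\sign(x_i)+\mu_i}
\le -\frac{\gamma_1|x_i|^\omega+\gamma_3}{\mu_i+1}.
\end{equation*}
Here I drop the nonnegative middle term and use $\sign(x_i)+\mu_i\le\mu_i+1$.

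From this inequality there are two routes, and I would present the one matching Lemma~\ref{lem:finite_time}. Take $V_i=\tfrac12x_i^2$. Then $\dot V_i=x_i\dot x_i=|x_i|\frac{d}{dt}|x_i|\le -\frac{\gamma_1}{\mu_i+1}|x_i|^{1+\omega}=-\frac{\gamma_1 2^{(1+\omega)/2}}{\mu_i+1}V_i^{(1+\omega)/2}$. Because $0<\omega<1$, the exponent $\rho=(1+\omega)/2$ lies in $(0,1)$, so Lemma~\ref{lem:finite_time} applies with $q=\gamma_1 2^{(1+\omega)/2}/(\mu_i+1)$. This yields the settling-time estimate $T_s\le t_r+\frac{V_i^{1-\rho}(t_r)}{q(1-\rho)}$, where $t_r$ is the reaching time of the manifold. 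As a complementary remark I would also note that the constant term gives directly $\frac{d}{dt}|x_i|\le-\gamma_3/(\mu_i+1)$. This yields the simpler bound $T\le t_r+(\mu_i+1)|x_i(t_r)|/\gamma_3$ and shows convergence is not merely asymptotic even near the origin. The $\gamma_2$ term, with exponent $p\pm1\in(0,3)$, only accelerates convergence far from the origin and is not needed for the bound.

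The main technical obstacle is the discontinuity at $x_i=0$. There $\sign(x_i)$ and hence the right-hand side of \eqref{SD} are discontinuous, so $|x_i|$ and $V_i$ are not differentiable in the classical sense. I would handle this by arguing on intervals where $x_i\neq0$, on which the computation above is valid. I would then observe that the strictly negative derivative of $|x_i|$, bounded away from zero by $\gamma_3/(\mu_i+1)$, forbids crossing: once $x_i$ reaches $0$ it cannot leave. In Filippov terms, $0$ is an attractive sliding point because the vector field points toward the origin from both sides. A short remark would justify that Lemma~\ref{lem:finite_time} holds with the upper right Dini derivative, which is all that is needed here. Finally I would take the maximum of the three settling times over $i$ to conclude that all states reach the origin in finite time after the manifold is reached.
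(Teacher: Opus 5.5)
Your proof is correct, but it is organized differently from the paper's. The paper uses the single aggregated function $V_1=\sum_{i=1}^3|x_i|$ and bounds every denominator by the uniform constant $\bar{\mu}=\max_i(\mu_i+1)$. To close the estimate $\dot V_1\le-(\gamma_1/\bar{\mu})V_1^\omega$ it then needs the subadditivity inequality $\sum_i|x_i|^\omega\ge\bigl(\sum_i|x_i|\bigr)^\omega$ for $0<\omega<1$. This gives one settling-time bound for the whole state in terms of the $\ell_1$-norm.

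You instead use the fact that \eqref{SD} is decoupled and treat each channel on its own. This removes the power inequality and the uniform constant $\bar{\mu}$. After simplification, your quadratic $V_i$ gives exactly $T-t_r\le(\mu_i+1)|x_i(t_r)|^{1-\omega}/\bigl(\gamma_1(1-\omega)\bigr)$. This is sharper than the aggregated bound, and it is the same per-channel estimate the paper itself only states later, in Section~\ref{sec:stochastic}.

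The channel-wise view also handles the three sliding variables reaching their manifolds at different times. The aggregated $V_1$ implicitly needs all channels sliding at once, and the paper's bound is written with $V_1(0)$ rather than the value at the reaching instant. Your handling of the discontinuity at $x_i=0$ fills in a point the paper passes over: the $C^1$ choice $\tfrac12x_i^2$, the no-crossing argument, and the Filippov observation that the field points toward the origin from both sides. The paper's version is shorter in exchange.
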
 
\begin{proof}
 Consider the Lyapunov candidate function
\begin{equation}\label{lya1}
     V_1(t) = \|X\|_1 = \sum_{i=1}^3 |x_i(t)|.
\end{equation}

Taking the time derivative, we get
\begin{equation*}
  \dot{V}_1(t) = \sum_{i=1}^{3} \frac{d}{dt} |x_i(t)| = \sum_{i=1}^{3} \sign(x_i(t)) \,\dot{x}_i(t).
\end{equation*}

Substituting the sliding dynamics \eqref{SD}, we have
\begin{equation*}
  \dot{V}_1(t) = -\sum_{i=1}^3 \frac{\gamma_1 |x_i(t)|^\omega
   +\gamma_2 |x_i(t)|^{p+\sign(x_i(t))} +\gamma_3}{\sign(x_i(t))+\mu_i}.
\end{equation*}
Since $\mu_i>1$, we define $\bar{\mu}=\max_{i=1,2,3} \{\mu_i+1\}$. Next, 
since $\sign(x_i(t))+\mu_i\le\mu_i+1\le\bar{\mu}$, we obtain
\begin{equation}\label{31}
\dot{V}_1(t)
\le
-\frac{1}{\bar{\mu}}
\sum_{i=1}^{3}
\Bigl(
\gamma_1|x_i(t)|^\omega
+\gamma_2|x_i(t)|^{p+\sign(x_i(t))}
+\gamma_3
\Bigr)
<0.
\end{equation}
Therefore, the sliding dynamics \eqref{SD} are stable.

According to \eqref{lya1} and \eqref{31}, we have
\begin{equation*}
\dot{V}_1(t) \le -\frac{\gamma_1}{\bar{\mu}}
\sum_{i=1}^{3}|x_i(t)|^\omega \le
-\frac{\gamma_1}{\bar{\mu}}
\biggl(\sum_{i=1}^{3}|x_i(t)|\biggr)^\omega
= - \frac{\gamma_1}{\bar{\mu}}V_1^\omega(t).
\end{equation*}
Hence, according to Lemma~\ref{lem:finite_time}, the states $x_i(t)$
converge to zero in finite time, with
\begin{equation*}
T_1
\le
\frac{\bar{\mu}V_1^{1-\omega}(0)}
{\gamma_1(1-\omega)}.
\end{equation*}
This completes the proof.
\end{proof}

\begin{figure}[!t]
\centering
\includegraphics[width=0.9\textwidth]{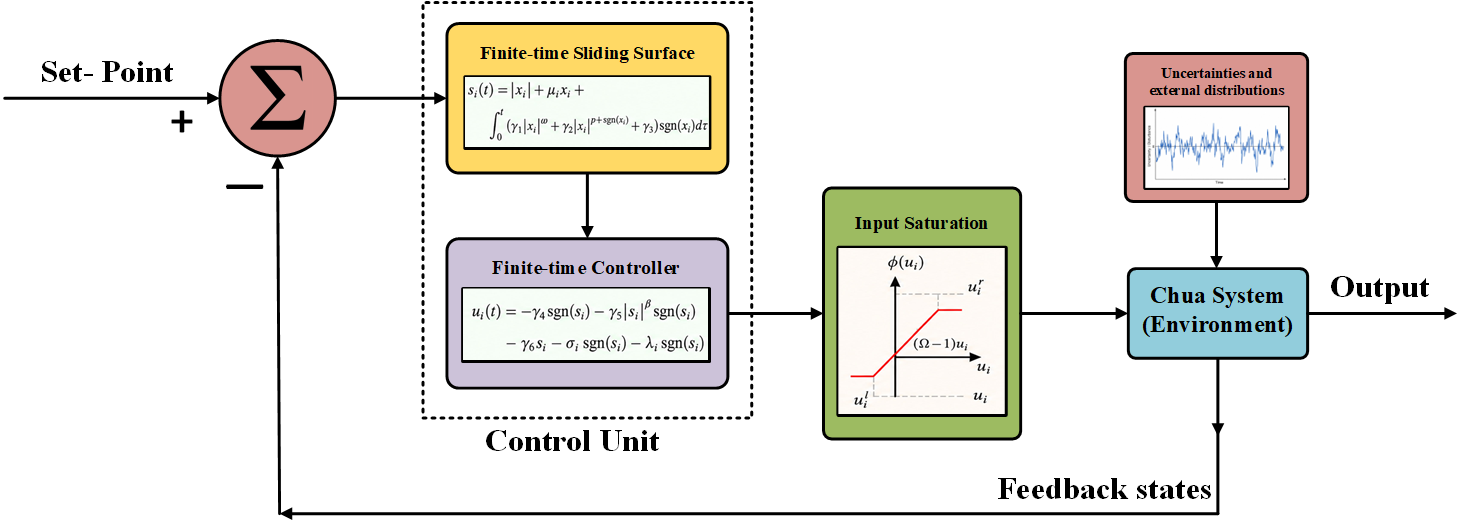}
\caption{Schematic of the proposed UFT-SMC framework for the Chua system with input saturation and lumped uncertainties.}
\label{Contro_Schematic}
\end{figure}

The control law is now developed to guarantee the convergence of the system trajectories to the sliding surface while ensuring robustness against system uncertainties and external disturbances. 
The overall structure of the proposed control scheme is illustrated in Figure~\ref{Contro_Schematic}.

For the controller design, the following control law is proposed
\begin{multline}\label{controller}
  u_i(t) = -\Bigg[(\sigma_i+\lambda_i)\sign(s_i(t))+\frac{1}{\sign(x_i(t))+\mu_i}
  \Big( (\gamma_1 |x_i(t)|^\omega+\gamma_2 |x_i(t)|^{p+\sign(x_i(t))}+\gamma_3)\sign(x_i(t)) \\
  +(\gamma_4 |s_i(t)|^\beta+\gamma_5 |s_i(t)|^{q+\sign(s_i(t))}+\gamma_6)\sign(s_i(t)) \Big) \Bigg].
\end{multline}
where, 
$0<\beta<1$, $1<q<2$,
$\gamma_4$, $\gamma_5$, and $\gamma_6$ are positive constants.

\begin{theorem}
Under the proposed control law \eqref{controller}, the trajectories of the Chua system \eqref{system} converge to the origin in finite time.
\end{theorem}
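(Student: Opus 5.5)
The proof splits into a reaching phase and a sliding phase. In the reaching phase we show that $s_i$ reaches zero in finite time. Once $s_i\equiv 0$, Theorem~\ref{thm:1} gives finite-time convergence of the states.

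For the reaching phase we use the Lyapunov candidate
\begin{equation*}
V_2(t)=\sum_{i=1}^3 |s_i(t)|,
\end{equation*}
with derivative $\dot V_2=\sum_i \sign(s_i)\dot s_i$. From \eqref{sdot}, $\dot s_i=(\sign(x_i)+\mu_i)\dot x_i+(\gamma_1|x_i|^\omega+\gamma_2|x_i|^{p+\sign(x_i)}+\gamma_3)\sign(x_i)$. Substituting $\dot x_i=g_i+\eta_i+u_i+\Delta u_i$ from \eqref{system} and \eqref{sat_decomp}, we insert the control law \eqref{controller}. The factor $\sign(x_i)+\mu_i$, which lies in $[\mu_i-1,\mu_i+1]$ and is positive because $\mu_i>1$, multiplies $u_i$. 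It therefore cancels the $1/(\sign(x_i)+\mu_i)$ in \eqref{controller}, so the $x_i$-dependent term $(\gamma_1|x_i|^\omega+\dots)\sign(x_i)$ cancels exactly. What remains is
\begin{multline*}
\dot s_i=(\sign(x_i)+\mu_i)\bigl(g_i+\eta_i+\Delta u_i-(\sigma_i+\lambda_i)\sign(s_i)\bigr)\\
-(\gamma_4|s_i|^\beta+\gamma_5|s_i|^{q+\sign(s_i)}+\gamma_6)\sign(s_i).
\end{multline*}

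Multiplying by $\sign(s_i)$ and using \eqref{limit_sys} and \eqref{limit_sat} gives
\begin{equation*}
\sign(s_i)\bigl(g_i+\eta_i+\Delta u_i\bigr)-(\sigma_i+\lambda_i)\le 0 .
\end{equation*}
Since $\sign(x_i)+\mu_i>0$, the first product is nonpositive and we can drop it. This leaves
\begin{equation*}
\dot V_2\le-\sum_i\bigl(\gamma_4|s_i|^\beta+\gamma_5|s_i|^{q+\sign(s_i)}+\gamma_6\bigr)\le-\gamma_4\sum_i|s_i|^\beta\le-\gamma_4V_2^\beta,
\end{equation*}
where the last step uses the subadditivity of $r\mapsto r^\beta$ for $0<\beta<1$, exactly as in the proof of Theorem~\ref{thm:1}. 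Lemma~\ref{lem:finite_time} then yields a reaching time $T_r\le V_2^{1-\beta}(0)/(\gamma_4(1-\beta))$. The constant term $-\gamma_6$ also gives strict decrease while $s_i\neq0$, and it keeps $s_i$ at zero once it has been reached.

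The total settling time is therefore bounded by $T_r+T_1$, where $T_1$ comes from Theorem~\ref{thm:1} applied from time $T_r$ with $V_1(T_r)$ in place of $V_1(0)$. $V_1(T_r)$ is finite because the states remain bounded; we take this from the Assumption, since the reaching phase itself gives no direct bound on $x$.

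The main obstacle is the step where the $(\sigma_i+\lambda_i)$ term must dominate after being multiplied by $(\sign(x_i)+\mu_i)$. This works only because the same positive factor multiplies both the disturbance and the robust term, so we must check carefully that no unscaled disturbance remains. A second difficulty is the discontinuity of $\sign(x_i)$ at $x_i=0$, where we use the convention $\sign(0)=0$ so that the factor equals $\mu_i>0$. A third is the apparent circularity of using $\Delta u_i$, which is defined via $u_i$, inside the bound \eqref{limit_sat}. We resolve it by taking \eqref{limit_sat} as given by the Assumption.
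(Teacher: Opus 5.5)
Your proposal is correct and follows the paper's proof essentially step for step: you use the same $V_2=\sum_i|s_i|$, the same cancellation of the $x_i$-terms through the positive factor $\sign(x_i)+\mu_i$, the same domination of $g_i+\eta_i+\Delta u_i$ by $\sigma_i+\lambda_i$, then subadditivity of $r\mapsto r^\beta$ with Lemma~\ref{lem:finite_time}, and finally Theorem~\ref{thm:1} for the sliding phase. Your extra care in restarting the sliding-phase bound from $V_1(T_r)$, and in noting that the bound on $\Delta u_i$ is simply assumed (as the paper also does), refines the same argument rather than changing the route.
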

\begin{proof} 
Consider the Lyapunov candidate function
\begin{equation}\label{lya2}
    V_2(t)=\|S\|_1=\sum_{i=1}^{3}|s_i(t)|.
\end{equation}

Taking the time derivative yields
\begin{equation}\label{27}
    \dot{V}_2(t)
    =\sum_{i=1}^{3}\frac{d}{dt}|s_i(t)|
    =\sum_{i=1}^{3}\sign(s_i(t))\dot{s}_i(t).
\end{equation}

By substituting the sliding dynamics \eqref{sdot} into \eqref{27}, we obtain
\begin{equation*}
\dot{V}_2(t) = \sum_{i=1}^{3}\sign(s_i(t))
\Big[ \big(\sign(x_i(t))+\mu_i\big)\dot{x}_i(t)
+ \big( \gamma_1|x_i(t)|^\omega +\gamma_2|x_i(t)|^{p+\sign(x_i(t))} +\gamma_3 \big)\sign(x_i(t)) \Big].
\end{equation*}

Substituting the system dynamics \eqref{system} and the control law
\eqref{controller} into the above expression gives
\begin{multline*}
\dot{V}_2(t)
\le \sum_{i=1}^3 \big(\sign(x_i(t)) +\mu_i\big)
\Big[ \sign(s_i(t)) \big(g_i(x)+\eta_i(x,t)+\Delta u_i(t)\big) -(\sigma_i+\lambda_i) \Big] \\
 -\sum_{i=1}^3 \Big( \gamma_4|s_i(t)|^\beta +\gamma_5|s_i(t)|^{q+\sign(s_i(t))} +\gamma_6 \Big).
\end{multline*}

Since $\sign(x_i(t))+\mu_i>0$, applying the bounds
\eqref{limit_sys} and \eqref{limit_sat} yields
\begin{equation}\label{52}
\dot{V}_2(t)
\le
-\sum_{i=1}^{3}
\Big(
\gamma_4|s_i(t)|^\beta
+\gamma_5|s_i(t)|^{q+\sign(s_i(t))}
+\gamma_6
\Big)
<0.
\end{equation}

Therefore, the sliding manifold is asymptotically stable. Moreover, from
\eqref{lya2} and \eqref{52}, it follows that
\begin{equation}\label{V2_finite}
\dot{V}_2(t) \le -\gamma_4\sum_{i=1}^{3}
|s_i(t)|^\beta \le
-\gamma_4 \left( \sum_{i=1}^3 |s_i(t)|\right)^\beta =
-\gamma_4V_2^\beta(t) <0.
\end{equation}

According to Lemma~\ref{lem:finite_time}, the sliding variables reach
the sliding manifold in finite time, with the reaching time satisfying
\begin{equation}
T_s \le \frac{V_2^{1-\beta}(0)} {\gamma_4(1-\beta)}.
\end{equation}

Once the sliding manifold is reached, the system evolves according to
the equivalent sliding dynamics \eqref{SD}. Therefore, by
Theorem~\ref{thm:1}, the states $x_i(t)$ converge to the origin in
finite time. Hence, the closed-loop Chua system is finite-time stable,
which completes the proof.
\end{proof}

\begin{remark}
Following the above stability analysis, the flexibility of the proposed UFT-SMC can be further interpreted through the state-dependent exponents $p + \sign(x_i)$ and $q + \sign(s_i)$ in the  sliding surface and control law, respectively. 
By choosing  $0<\omega<1$, $1<p<2$, $0<\beta<1$, and $1<q<2$, the exponents switch  between $p+1>2$ and $p-1\in(0,1)$ for the state dynamics, and between $q+1>2$ and $q-1\in(0,1)$ for the sliding dynamics. 
This structure provides flexible shaping of the nonlinear convergence behavior without  requiring an explicit piecewise formulation. 
The outer sign functions preserve the restoring direction toward the equilibrium and the sliding  manifold, while the conditions $1<p<2$ and $1<q<2$ avoid negative-power  singularities near the origin.
Employing the same exponent-shaping  mechanism in both the state and sliding dynamics further yields a unified and structurally symmetric control design.
\end{remark}

%
\section{Stochastic Robustness Analysis}\label{sec:stochastic}
In practical implementations, model uncertainties, external perturbations, measurement errors, and actuator imperfections may vary randomly in time.
In this section, these effects are represented by a stochastic perturbation acting on the closed-loop sliding dynamics.
Rather than imposing moment conditions such as zero mean or bounded variance, we employ an almost-sure boundedness assumption. 
This is particularly natural for sliding-mode control, since its robustness properties are based on bounds on the magnitude of matched perturbations.

Let $(\Omega,\mathcal{F},\mathbb{P})$ be a probability space. For each
$i=1,2,3$, let
    $\zeta_i\colon[0,\infty)\times\Omega\to\mathbb{R}$
denote the residual stochastic perturbation in the $i$th sliding dynamics. 
We assume that its sample paths are measurable and locally integrable.
For convenience, define the set-valued sign function
\begin{equation}\label{eq:set_valued_sign}
\operatorname{Sgn}(s)=\begin{cases}
\{1\}, & s>0,\\[1mm]
[-1,1], & s=0,\\[1mm]
\{-1\}, & s<0.
\end{cases}
\end{equation}
This definition~\eqref{eq:set_valued_sign} allows the discontinuous sliding-mode dynamics to be
interpreted in the standard Filippov sense.
Furthermore, we define
\begin{equation}\label{eq:Psi_q}
\Psi_q(s)=\begin{cases}
|s|^{q+1}, & s>0,\\[1mm]
0, & s=0,\\[1mm]
|s|^{q-1}, & s<0,
\end{cases}
\qquad 1<q<2.
\end{equation}
Hence, for $s\neq 0$,
    $\Psi_q(s)=|s|^{q+\sign(s)}$.
The explicit definition in \eqref{eq:Psi_q} avoids the ambiguity of
the state-dependent exponent at $s=0$.

After substituting the proposed controller \eqref{controller} into the
sliding-surface dynamics and using the decomposition \eqref{sat_decomp},
define the \textit{residual term}
\begin{equation}\label{eq:residual_term}
r_i(t)=
\bigl(\sign(x_i(t))+\mu_i\bigr)
\bigl[g_i(x)+\eta_i(x,t)+\Delta u_i(t)\bigr].
\end{equation}

The stochastic perturbation $\zeta_i(t)$ is introduced as an additional
residual perturbation acting on the closed-loop sliding dynamics.
Accordingly, the perturbed reaching dynamics are represented by
\begin{equation}\label{eq:stochastic_sliding_dynamics}
    \begin{split}
\dot{s}_i(t)\in {}
r_i(t)&-\bigl(\sign(x_i(t))+\mu_i\bigr)
(\sigma_i+\lambda_i)\operatorname{Sgn}(s_i(t))\\
&\qquad - \bigl(\gamma_4|s_i(t)|^\beta +\gamma_5\Psi_q(s_i(t))
+\gamma_6 \bigr)\operatorname{Sgn}(s_i(t)) +\zeta_i(t).
 \end{split}
\end{equation}

\subsection{Almost-Sure Bounded Disturbances}
The following assumption specifies the stochastic disturbance class
considered in the analysis.

\begin{assum}[disturbance bound ]\label{ass:2}
For each $i=1,2,3$, there exists a deterministic constant
$\bar{\zeta}_i\ge0$ such that
\begin{equation}\label{eq:as_stochastic_bound}
\mathbb{P} \bigl(    |\zeta_i(t,\omega)|\le \bar{\zeta}_i
    \quad \text{for almost every } t\ge0 \bigr) =1.
\end{equation}
Moreover, the robust reaching gain satisfies
    $\gamma_6>\bar{\zeta}_i$,
    $i=1,2,3$.
\end{assum}
For later use, define the \textit{positive robustness margin}
    $\delta_i := \gamma_6-\bar{\zeta}_i >0$.

Assumption~\ref{ass:2} does not require the stochastic perturbation to have zero mean, finite variance, or any particular probability distribution.
The disturbance may therefore be biased, non-Gaussian, correlated in time, or generated by uncertain physical parameters, provided that its sample paths satisfy the bound \eqref{eq:as_stochastic_bound} almost surely.

\subsection{Almost-Sure Finite-Time Reaching}
We now show that the finite-time reaching property is preserved under the stochastic perturbations described above.
\begin{theorem}[Almost-sure finite-time reaching]\label{thm:3}
Consider the perturbed sliding dynamics \eqref{eq:stochastic_sliding_dynamics} and suppose that Assumption~\ref{ass:2} holds. 
Then, for every initial condition $s_i(t_0)\in\mathbb{R}$, the sliding variable $s_i(t)$ reaches the manifold
    $\mathcal{S}_i=\{s_i=0\}$
in finite time almost surely. More precisely, the hitting time
\begin{equation}\label{eq:hitting_time}
    T_{s,i} :=\inf \{ t\ge t_0\colon s_i(t)=0 \}
\end{equation}
satisfies
\begin{equation}\label{eq:stochastic_settling_time}
  T_{s,i}-t_0\le\frac{|s_i(t_0)|^{1-\beta}}{\gamma_4(1-\beta)}
\qquad \text{almost surely}.
\end{equation}
Moreover, after reaching, the sliding condition can be maintained
in the Filippov sense under the disturbance bound
\eqref{eq:as_stochastic_bound}.
\end{theorem}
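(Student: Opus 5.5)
We argue pathwise. By Assumption~\ref{ass:2}, there is a full-measure event $\Omega_0\subset\Omega$ such that for every $\omega\in\Omega_0$ the sample path $t\mapsto\zeta_i(t,\omega)$ is measurable, locally integrable, and satisfies $|\zeta_i(t,\omega)|\le\bar\zeta_i$ for almost every $t\ge0$. We fix such an $\omega$ and treat \eqref{eq:stochastic_sliding_dynamics} as a deterministic differential inclusion with a measurable, essentially bounded forcing term. Since the right-hand side is upper semicontinuous with compact convex values in $s_i$ and is measurable and integrably bounded in $t$, Filippov solutions exist and are absolutely continuous. Everything below then holds for every $\omega\in\Omega_0$, hence almost surely. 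Note that the constant $\delta_i=\gamma_6-\bar\zeta_i$ is deterministic, so the settling-time bound will not depend on $\omega$.

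The core step is a Lyapunov estimate for $W_i(t)=|s_i(t)|$ on the interval $[t_0,T_{s,i})$, on which $s_i\neq0$. There $\operatorname{Sgn}(s_i)=\{\sign(s_i)\}$, $W_i$ is absolutely continuous, and for a.e.\ $t$ we have $\dot W_i=\sign(s_i)\dot s_i$. Multiplying each term of the inclusion by $\sign(s_i)$ gives three contributions. First, the residual term satisfies $\sign(s_i)r_i\le(\sign(x_i)+\mu_i)(\sigma_i+\lambda_i)$ by \eqref{limit_sys}, \eqref{limit_sat}, and $\sign(x_i)+\mu_i>0$ (which uses $\mu_i>1$). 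This cancels exactly against the switching term, as in the proof of the preceding theorem. Second, the reaching term contributes $-(\gamma_4|s_i|^\beta+\gamma_5\Psi_q(s_i)+\gamma_6)$. Third, the disturbance contributes $\sign(s_i)\zeta_i\le\bar\zeta_i$. Dropping the nonnegative term $\gamma_5\Psi_q$, we obtain
\begin{equation*}
\dot W_i(t)\le-\gamma_4 W_i^\beta(t)-\delta_i\le-\gamma_4W_i^\beta(t)\quad\text{for a.e. } t\in[t_0,T_{s,i}).
\end{equation*}

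Next we integrate this differential inequality directly rather than citing Lemma~\ref{lem:finite_time}, because that lemma is stated for a.e.-differentiable functions only implicitly. Since $W_i^{1-\beta}$ is absolutely continuous away from zero, we have $\frac{d}{dt}W_i^{1-\beta}=(1-\beta)W_i^{-\beta}\dot W_i\le-\gamma_4(1-\beta)$. Hence $W_i^{1-\beta}(t)\le W_i^{1-\beta}(t_0)-\gamma_4(1-\beta)(t-t_0)$ on $[t_0,T_{s,i})$. The right-hand side vanishes at $t_0+|s_i(t_0)|^{1-\beta}/(\gamma_4(1-\beta))$, and $W_i$ is continuous and nonnegative, so $W_i$ must hit zero no later than this time. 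This gives \eqref{eq:stochastic_settling_time}. The inequality also excludes finite escape on this interval. The case $s_i(t_0)=0$ is trivial.

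For the maintenance claim, we show that $\{0\}$ is forward invariant via a Filippov argument. Suppose that after $T_{s,i}$ the solution left zero on some interval. On any subinterval where $s_i\neq0$, the estimate above gives $\dot W_i\le-\delta_i<0$ a.e., so $W_i$ would be nonincreasing starting from $0$, which is a contradiction. Equivalently, at $s_i=0$ the set-valued term $-(\gamma_6+(\sign(x_i)+\mu_i)(\sigma_i+\lambda_i))[-1,1]$ contains an interval strictly larger than $|r_i+\zeta_i|\le(\sign(x_i)+\mu_i)(\sigma_i+\lambda_i)+\bar\zeta_i$, since $\gamma_6>\bar\zeta_i$. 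Therefore $0$ lies in the right-hand side, and the equivalent control keeps $\dot s_i=0$. The main obstacle is the rigor of the Filippov and measurability setup, namely existence of solutions and the a.e.\ chain rule for $|s_i|$. The key device for handling it is the pathwise reduction on the full-measure event $\Omega_0$.
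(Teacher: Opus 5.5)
Your proposal is correct and follows the paper's proof essentially step for step. It uses the same pathwise reduction to the probability-one event and the same Lyapunov function $|s_i|$, with $\sign(s_i)r_i$ cancelling the switching term. It then discards $\gamma_5\Psi_q(s_i)$ and $\delta_i$, integrates $|s_i|^{1-\beta}$ directly, and checks that $0$ lies in the Filippov right-hand side at $s_i=0$.

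Your extra contradiction argument based on $\dot W_i\le-\delta_i<0$ off the manifold is a modest strengthening, as is your remark on existence of Filippov solutions. The argument shows that every Filippov solution stays on $\{s_i=0\}$ after $T_{s,i}$. The paper shows only that a sliding selection with $\dot s_i=0$ is admissible.
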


\begin{proof}
Consider the Lyapunov function
    $V_i(t)=|s_i(t)|$.
For $s_i(t)\neq0$, the function $V_i$ is differentiable almost
everywhere and
    $\dot{V}_i(t) = \sign(s_i(t))\dot{s}_i(t)$.
Using \eqref{eq:stochastic_sliding_dynamics}, for $s_i(t)\neq0$ we obtain
\begin{equation*}
    \begin{split}
\dot{V}_i(t)
&\le \sign(s_i(t))r_i(t) -\bigl(\sign(x_i(t))+\mu_i\bigr)(\sigma_i+\lambda_i) \\
&\qquad -\gamma_4|s_i(t)|^\beta
-\gamma_5\Psi_q(s_i(t))
-\gamma_6
+\sign(s_i(t))\zeta_i(t).
  \end{split}
\end{equation*}

From the definition of $r_i(t)$ and the bounds
\eqref{limit_sys} and \eqref{limit_sat}, we have
\begin{equation*}
\sign(s_i(t))r_i(t) \le |r_i(t)| \le \bigl(\sign(x_i(t))+\mu_i\bigr)
(\sigma_i+\lambda_i).
\end{equation*}
Therefore,
\begin{equation*}
\dot{V}_i(t)
\le
-\gamma_4|s_i(t)|^\beta
-\gamma_5\Psi_q(s_i(t))
-\gamma_6
+|\zeta_i(t)|.
\end{equation*}
By Assumption~\ref{ass:2},
    $|\zeta_i(t)|\le\bar{\zeta}_i$
almost surely. Therefore,
\begin{equation}\label{eq:Vdot_final}
\begin{split}
\dot{V}_i(t)
&\le -\gamma_4 V_i(t)^\beta - \gamma_5\Psi_q(s_i(t)) - (\gamma_6-\bar{\zeta}_i)\\
&= -\gamma_4 V_i(t)^\beta -\gamma_5\Psi_q(s_i(t)) - \delta_i\\
&\le - \gamma_4 V_i(t)^\beta
\end{split}
\end{equation}
for almost every $t<T_{s,i}$ and almost every $\omega\in\Omega$.
Since $0<\beta<1$, inequality \eqref{eq:Vdot_final} can be integrated pathwise.
For $V_i(t)>0$,
\begin{equation*}
   \frac{d}{dt} V_i(t)^{1-\beta} = (1-\beta)V_i(t)^{-\beta} \dot{V}_i(t)
   \le -\gamma_4(1-\beta).
\end{equation*}
Now, integration from $t_0$ to $t<T_{s,i}$ yields
\begin{equation*} 
   V_i(t)^{1-\beta}\le V_i(t_0)^{1-\beta} - \gamma_4(1-\beta)(t-t_0).
\end{equation*}
Therefore, the right-hand side becomes zero no later than
\begin{equation*}
  t = t_0+\frac{V_i(t_0)^{1-\beta}}{\gamma_4(1-\beta)},
\end{equation*}
which proves \eqref{eq:stochastic_settling_time}.

It remains to consider the behavior on the manifold $s_i=0$.
At $s_i=0$, we have
$\Psi_q(0)=0$ and $\operatorname{Sgn}(0)=[-1,1]$.
Therefore, \eqref{eq:stochastic_sliding_dynamics} becomes
\begin{equation}\label{eq:filippov_at_zero}
\dot{s}_i \in r_i(t) - \Big[\bigl(\sign(x_i(t))+\mu_i\bigr)(\sigma_i+\lambda_i)
+\gamma_6 \Big][-1,1] +\zeta_i(t).
\end{equation}

From \eqref{eq:residual_term}, \eqref{limit_sys}, and
\eqref{limit_sat}, we have
\begin{equation*}
|r_i(t)| \le \bigl(\sign(x_i(t))+\mu_i\bigr) (\sigma_i+\lambda_i).
\end{equation*}
Hence, by Assumption~\ref{ass:2},
\begin{equation*}
    \begin{split}
|r_i(t)+\zeta_i(t)| &\le |r_i(t)|+|\zeta_i(t)|\\
&\le \bigl(\sign(x_i(t))+\mu_i\bigr)
(\sigma_i+\lambda_i) +\bar{\zeta}_i\\
&< \bigl(\sign(x_i(t))+\mu_i\bigr) (\sigma_i+\lambda_i) +\gamma_6
 \end{split}
\end{equation*}
almost surely.

Therefore, zero belongs to the right-hand side of \eqref{eq:filippov_at_zero}. 
Hence, an admissible Filippov selection
can satisfy $\dot{s}_i=0$ on the manifold, so the sliding condition
can be maintained after reaching.
Therefore, $s_i=0$ is invariant in the Filippov sense.

The argument holds for every disturbance realization belonging to the probability-one event defined by \eqref{eq:as_stochastic_bound}.
Hence the finite-time reaching result holds almost surely.
\end{proof}

\subsection{Finite-Time Stabilization of the State Variables}
Theorem~\ref{thm:3} establishes finite-time reaching of the sliding manifold under bounded stochastic perturbations. 
Once $s_i=0$ has been reached, the state dynamics are governed by the equivalent sliding dynamics derived in Section~\ref{sec:3}. 
Under the conditions of Theorem~\ref{thm:1}, these dynamics drive the states to the desired equilibrium in finite time.

Let $T_{s,i}$ denote the reaching time from Theorem~\ref{thm:3} and let $T_{x,i}$ denote the additional finite time required for the corresponding state $x_i$ to reach zero along the sliding manifold.
Then, the total convergence time satisfies
    $T_i \le T_{s,i}+T_{x,i}$
    almost surely.

In particular, applying the finite-time argument to each component of the sliding dynamics \eqref{SD} yields
\begin{equation*} 
   T_{x,i} \le\frac{(\mu_i+1)|x_i(T_{s,i})|^{1-\omega}}{\gamma_1(1-\omega)}.
\end{equation*}
Consequently,
\begin{equation*}
T_i \le t_0+\frac{|s_i(t_0)|^{1-\beta}}{\gamma_4(1-\beta)}
+\frac{(\mu_i+1)|x_i(T_{s,i})|^{1-\omega}}{\gamma_1(1-\omega)}
\qquad
\text{almost surely}.
\end{equation*}
Thus, bounded stochastic perturbations do not destroy the finite-time stabilization property, provided that the robust control margin $\gamma_6-\bar{\zeta}_i$ remains strictly positive.

\subsection{High-Probability Disturbance Bounds}
Assumption~\ref{ass:2} provides an almost-sure disturbance bound. 
In some applications, however, only a high-probability bound may be available.
The pathwise argument above can then be used conditionally on the corresponding high-probability event.

Suppose that, for a prescribed time horizon $T>t_0$, there exist $\bar{\zeta}_i>0$ and $\varepsilon_i\in(0,1)$ such that
\begin{equation}\label{eq:high_prob_bound}
\mathbb{P}\Bigl(\sup_{t_0\le t\le T}
    |\zeta_i(t)| \le \bar{\zeta}_i \Bigr) \ge 1-\varepsilon_i,
\end{equation}
and assume again that $\gamma_6>\bar{\zeta}_i$.
Define the deterministic upper bound
\begin{equation} 
\widehat{T}_{s,i} =t_0+\frac{|s_i(t_0)|^{1-\beta}}{\gamma_4(1-\beta)}.
\end{equation}
If
    $\widehat{T}_{s,i}\le T$,
then the pathwise proof of Theorem~\ref{thm:3} applies on the event appearing in
\eqref{eq:high_prob_bound}. Consequently,
we have the following corollary.

\begin{cor}[Finite-time reaching with prescribed
probability]\label{cor:1}
If the disturbance satisfies \eqref{eq:high_prob_bound},
$\gamma_6>\bar{\zeta}_i$, and $\widehat{T}_{s,i}\le T$, then
\begin{equation*}
   \mathbb{P}\bigl( T_{s,i}\le\widehat{T}_{s,i} \bigr) \ge1-\varepsilon_i.
\end{equation*}
Thus, any available high-probability bound for the stochastic
disturbance directly yields a corresponding probabilistic guarantee
for finite-time reaching.
\end{cor}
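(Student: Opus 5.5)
The plan is to reduce Corollary~\ref{cor:1} to the pathwise argument already given in the proof of Theorem~\ref{thm:3}. We work on the event
\begin{equation*}
E_i:=\Bigl\{\omega\in\Omega\colon \sup_{t_0\le t\le T}|\zeta_i(t,\omega)|\le\bar{\zeta}_i\Bigr\},
\end{equation*}
which by \eqref{eq:high_prob_bound} satisfies $\mathbb{P}(E_i)\ge 1-\varepsilon_i$. The goal is the inclusion $E_i\subseteq\{T_{s,i}\le\widehat{T}_{s,i}\}$. Monotonicity of $\mathbb{P}$ then gives the claim directly. (If measurability of $\{T_{s,i}\le\widehat{T}_{s,i}\}$ is in doubt, we read the statement as holding for the inner probability, or note that $T_{s,i}$ is a hitting time of a continuous process and hence measurable.)

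Fix $\omega\in E_i$. The derivation of \eqref{eq:Vdot_final} uses the disturbance bound only pointwise in time, so we repeat it on $[t_0,\min(T_{s,i},T))$. The ingredients are the inequality $\sign(s_i)r_i\le(\sign(x_i)+\mu_i)(\sigma_i+\lambda_i)$ from \eqref{limit_sys} and \eqref{limit_sat}, the bound $|\zeta_i(t)|\le\bar{\zeta}_i$ valid on $E_i$ for $t\le T$, and $\gamma_6>\bar{\zeta}_i$. Together they yield $\dot V_i\le-\gamma_4V_i^\beta$ for almost every $t$ in that interval, with $V_i=|s_i|$. Integrating $\tfrac{d}{dt}V_i^{1-\beta}\le-\gamma_4(1-\beta)$ exactly as before gives
\begin{equation*}
V_i(t)^{1-\beta}\le V_i(t_0)^{1-\beta}-\gamma_4(1-\beta)(t-t_0)
\end{equation*}
for all $t<\min(T_{s,i},T)$.

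Next we conclude by contradiction. Suppose $T_{s,i}>\widehat{T}_{s,i}$. Since $\widehat{T}_{s,i}\le T$, the interval $[t_0,\widehat{T}_{s,i})$ lies in the region where the estimate is valid. Letting $t\uparrow\widehat{T}_{s,i}$ and using continuity of $s_i$ forces $V_i(\widehat{T}_{s,i})=0$. This contradicts the definition of $T_{s,i}$ as the first hitting time, so $T_{s,i}\le\widehat{T}_{s,i}$ on $E_i$.

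The main obstacle is the time-horizon bookkeeping. Unlike Assumption~\ref{ass:2}, the bound in \eqref{eq:high_prob_bound} holds only on $[t_0,T]$, so we must make sure the differential inequality is never used beyond $T$. This is exactly the role of the hypothesis $\widehat{T}_{s,i}\le T$, and it is why the contradiction argument is carried out on $[t_0,\widehat{T}_{s,i}]$ and not on all of $[t_0,\infty)$. A minor point is that the sup bound gives $|\zeta_i|\le\bar{\zeta}_i$ for every $t$ in the horizon, which is stronger than the almost-every-$t$ bound the pathwise proof actually needs, so no further work is required there.
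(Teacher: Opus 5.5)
Your proposal is correct and follows the paper's own argument. The paper notes that, when $\widehat{T}_{s,i}\le T$, the pathwise proof of Theorem~\ref{thm:3} applies on the event in \eqref{eq:high_prob_bound}; that event is therefore contained in $\{T_{s,i}\le\widehat{T}_{s,i}\}$, and monotonicity gives the bound. Your restriction of the Lyapunov estimate to $[t_0,\min(T_{s,i},T))$ and the contradiction step at $\widehat{T}_{s,i}$ make explicit the time-horizon bookkeeping that the paper leaves implicit.
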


\section{Graph-Learning-Based Controller Parameter Optimization}\label{sec:5}
This section presents the graph-learning framework developed to optimize the controller parameters. 
The system information is first organized through graph representation and feature extraction, 
followed by the formulation of the learning objective and a GCN-based actor-critic scheme for parameter optimization.

\subsection{Learning-Based Parameter-Tuning Framework}
The design parameters of the proposed sliding-mode controller directly influence the transient response, reaching behavior, and control effort of the closed-loop system. 
To avoid extensive trial-and-error tuning, their selection is formulated as a constrained sequential decision problem and addressed through a \textit{deep reinforcement learning} (DRL) framework \cite{DRL_reference}.

At each learning step $k$, the agent receives a descriptor $\boldsymbol{\xi}_k$ of the closed-loop behavior and generates a parameter-selection action $\boldsymbol{\nu}_k$. 
The selected parameters are applied to the proposed controller, and the resulting system response provides the subsequent descriptor $\boldsymbol{\xi}_{k+1}$ together with a scalar reward $\mathcal{R}_k$. 
The parameter-selection policy is denoted by $\Pi_{\boldsymbol{\vartheta}_A}$, 
where $\boldsymbol{\vartheta}_A$ represents the trainable actor parameters.

The quality of a state-action pair under policy $\Pi$ is characterized through the discounted return
\begin{equation}\label{eq:return}
\mathcal{C}^{\Pi} (\boldsymbol{\xi}_k,\boldsymbol{\nu}_k)
= \mathbb{E}_{\Pi} \biggl[\sum_{m=0}^{T_L} \lambda_d^m \mathcal{R}_{k+m} \biggr],
\end{equation}
where $\lambda_d\in[0,1)$ is the discount factor and $T_L$ denotes the learning horizon. 
The objective is to learn a policy that improves the expected return while restricting the generated controller parameters to their theoretically admissible ranges. 
Therefore, the DRL mechanism is employed for parameter optimization rather than direct generation of the control input.
The overall workflow of the proposed graph-learning-based parameter optimization is illustrated in Figure~\ref{graph}, showing the interaction between the learning process and the UFT-SMC.

\begin{figure}[!t]
\centering
\includegraphics[width=0.9\textwidth]{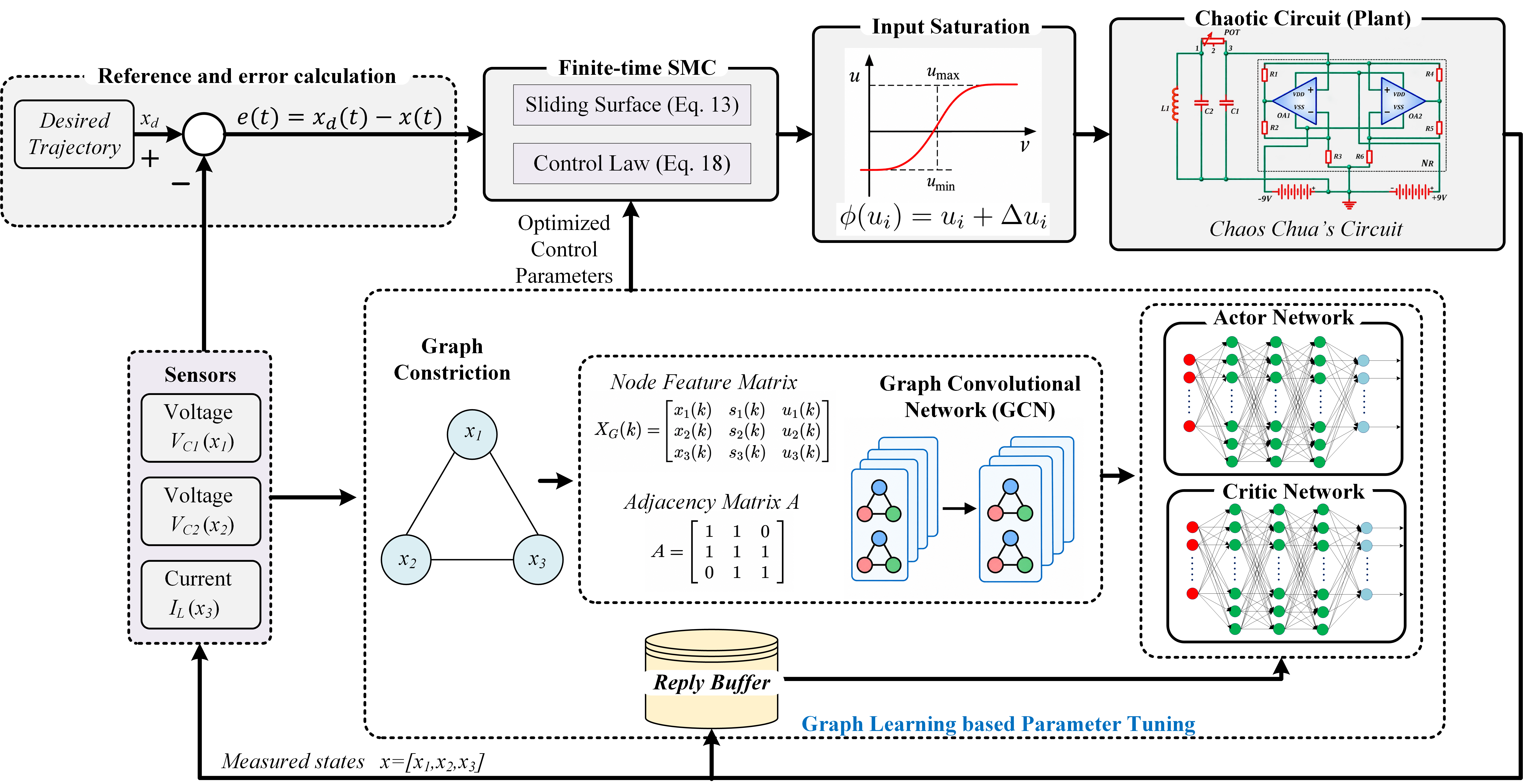}
\caption{Overall structure of the proposed graph-learning-based parameter optimization framework for the UFT-SMC.}
\label{graph}
\end{figure}

\subsection{Graph Representation and Feature Extraction}
A graph convolutional network (GCN) is employed to exploit the coupling structure of the Chua dynamics. 
The closed-loop system is represented by $\mathcal{G}_c=(\mathcal{V}_c,\mathcal{E}_c)$, where the three state channels constitute the graph nodes,
$\mathcal{V}_c=\{v_1,v_2,v_3\}$,
$v_i \leftrightarrow x_i$.
According to the dynamical interconnections, $x_1$ is coupled with $x_2$,
$x_2$ interacts with both $x_1$ and $x_3$, and $x_3$ is coupled with
$x_2$. The corresponding adjacency matrix reads
\begin{equation}\label{eq:adjacency}
\mathbf{A}_c=
\begin{bmatrix}
1&1&0\\
1&1&1\\
0&1&1
\end{bmatrix}.
\end{equation}
The diagonal entries account for the self-connections of the graph nodes.
Hence, the graph topology is directly obtained from the coupling structure of the plant dynamics.

To provide the learning network with both system and control information, the \textit{node-feature matrix} is 
\begin{equation}\label{eq:features}
\mathbf{X}_G(k)=
\begin{bmatrix}
x_1(k)&s_1(k)&u_1(k)\\
x_2(k)&s_2(k)&u_2(k)\\
x_3(k)&s_3(k)&u_3(k)
\end{bmatrix},
\end{equation}
where $x_i$, $s_i$, and $u_i$ denote the state, sliding variable, and control input of the $i$th channel, respectively.

Let $\mathbf{H}_G^{(0)}=\mathbf{X}_G$. 
The graph-convolution operation combines the node features according to the connectivity encoded by $\mathbf{A}_c$. 
The $\ell$th graph layer is expressed as
\begin{equation}\label{eq:gcn}
\mathbf{H}_G^{(\ell+1)}
= \sigma_g \Bigl( \mathbf{D}_c^{-1/2}\mathbf{A}_c \mathbf{D}_c^{-1/2}
\mathbf{H}_G^{(\ell)}\mathbf{W}_G^{(\ell)} + \mathbf{b}_G^{(\ell)} \Bigr),
\end{equation}
where $\mathbf{D}_c$ is the degree matrix associated with
$\mathbf{A}_c$, $\mathbf{W}_G^{(\ell)}$ and $\mathbf{b}_G^{(\ell)}$ are trainable parameters, and
$\sigma_g(\cdot)$ denotes the activation function. 
The output of the final graph layer is aggregated into a latent representation $\mathbf{z}_k$, which is subsequently supplied to the actor-critic architecture.

\subsection{Controller Parameterization and Learning Objective}
The action generated by the learning framework determines the adjustable parameters of the proposed controller. 
The optimization vector is defined as
\begin{equation}\label{eq:parameters}
\boldsymbol{\Theta}_c=
\begin{bmatrix}
   \gamma_1& \gamma_2& \gamma_3& \gamma_4& \gamma_5& \gamma_6& \mu_1& \mu_2& \mu_3
\end{bmatrix}^\top.
\end{equation}
Here, $\gamma_1$, $\gamma_2$, $\gamma_3$ and $\mu_i$ are associated with the sliding-surface design, whereas $\gamma_4$, $\gamma_5$, $\gamma_6$ determine the reaching dynamics of the controller. 
The exponent parameters $\omega$, $p$, $\beta$, and $q$ are kept fixed within their theoretically admissible ranges to reduce the search dimension and preserve the conditions employed in the convergence analysis.

Each optimized parameter is restricted to a prescribed interval,
$\theta_j^- \le \theta_j\le\theta_j^+$,
$\theta_j\in\boldsymbol{\Theta}_c$,
while the controller-design requirements impose
$\gamma_j>0$, $j=1,\ldots,6$,
   $\mu_i>1$, $i=1,2,3$.
Thus, the learning process explores only parameter combinations consistent with the theoretical controller design.
The \textit{learning reward} accounts for state stabilization, convergence toward the sliding manifold, and control effort:
\begin{equation}\label{eq:reward}
  \mathcal{R}_k= - \bigl[ c_x \|\mathbf{x}(k)\|_2^2
  + c_s \|\mathbf{s}(k)\|_2^2 + c_u \|\mathbf{u}(k)\|_2^2 \bigr],
\end{equation}
where $c_x$, $c_s$, $c_u>0$ are weighting coefficients. 
This reward \eqref{eq:reward} is used during training, while the conventional integral performance indices are reserved for the quantitative evaluation of the final closed-loop responses.

\subsection{GCN-Based Actor-Critic Parameter Optimization}
The latent graph representation $\mathbf{z}_k$ is provided to the actor, which generates a candidate controller-parameter vector. 
To guarantee feasible parameter values, the actor output is passed through a bound-preserving mapping $\mathcal{M}_{\Theta}(\cdot)$:
\begin{equation}\label{eq:actor}
  \boldsymbol{\Theta}_{c,k} = \mathcal{M}_{\Theta}
  \bigl[\mathcal{A} (\mathbf{z}_k;\boldsymbol{\vartheta}_A) \bigr],
\end{equation}
where $\mathcal{A}(\cdot)$ denotes the actor and $\boldsymbol{\vartheta}_A$ represents its trainable parameters. 
The resulting $\boldsymbol{\Theta}_{c,k}$ is employed in the proposed sliding surface and control law, 
and the corresponding closed-loop response is used to evaluate the selected parameters.

The critic
$\mathcal{C} (\mathbf{z}_k, \boldsymbol{\Theta}_{c,k}; \boldsymbol{\vartheta}_C )$
estimates the long-term quality of the actor decision, where $\boldsymbol{\vartheta}_C$ denotes the critic parameters. 
Target actor and critic networks, denoted by $\mathcal{A}^-$ and $\mathcal{C}^-$, are employed to improve learning stability. 
For the $j$th sample extracted from the replay memory, the \textit{target value} is calculated as
\begin{equation}\label{eq:target}
\mathcal{Y}_j = \mathcal{R}_j + \lambda_d
\mathcal{C}^- \bigl( \mathbf{z}_{j+1},
\mathcal{M}_{\Theta} [ \mathcal{A}^- (\mathbf{z}_{j+1}) ] \bigr).
\end{equation}

The critic parameters are updated by minimizing the temporal-difference loss
\begin{equation}\label{eq:critic_loss}
\mathcal{J}_C =\frac{1}{N_B}
\sum_{j=1}^{N_B} \bigl[
\mathcal{Y}_j- \mathcal{C}( \mathbf{z}_j, \boldsymbol{\Theta}_{c,j}; \boldsymbol{\vartheta}_C) \bigr]^2,
\end{equation}
where $N_B$ denotes the mini-batch size.
The actor parameters are updated using the deterministic policy gradient
\begin{equation}\label{eq:actor_gradient}
\nabla_{\boldsymbol{\vartheta}_A}\mathcal{J}_A
\approx \frac{1}{N_B} \sum_{j=1}^{N_B} \nabla_{\boldsymbol{\Theta}_c}
\mathcal{C}(\mathbf{z}_j, \boldsymbol{\Theta}_c; \boldsymbol{\vartheta}_C )
\big|_{ \boldsymbol{\Theta}_c=
\mathcal{M}_{\Theta}
[ \mathcal{A}(\mathbf{z}_j;\boldsymbol{\vartheta}_A)] }
\times \nabla_{\boldsymbol{\vartheta}_A}
\mathcal{M}_{\Theta}[ \mathcal{A} (\mathbf{z}_j;\boldsymbol{\vartheta}_A) ].
\end{equation}

The target-network parameters are softly updated according to
\begin{equation}\label{eq:soft_update}
\boldsymbol{\vartheta}_{A,C}^-
\leftarrow (1-\varepsilon_t) \boldsymbol{\vartheta}_{A,C}^-
+ \varepsilon_t \boldsymbol{\vartheta}_{A,C},
\qquad 0<\varepsilon_t\ll1.
\end{equation}

During training, exploratory perturbations are introduced to examine different regions of the admissible parameter space, and the resulting transitions are stored in replay memory for network updates. 
After convergence, the best admissible parameter vector $\boldsymbol{\Theta}_c^{*}$ is retained and employed in the proposed sliding-mode controller for the final closed-loop implementation.


\section{Numerical Simulations}\label{sec:6}
The effectiveness of the proposed approach is examined under the considered simulation setup, with further comparisons providing a quantitative assessment of its control performance.
\subsection{Simulation Setup}
Numerical simulations are carried out in MATLAB to evaluate the finite-time stabilization performance of the proposed UFT-SMC method \eqref{controller} for the uncertain Chua chaotic system 
\eqref{system}.
The initial conditions are selected as $x_1(0)=0.1$, $x_2(0)=0.05$, and $x_3(0)=0$.
The controller is activated at $t=0.1$\,s, while the control inputs and sliding variables are set to zero prior to this instant. 
The controller parameters are determined using the GCN-based actor-critic optimization framework as $\sigma_1=4.2$, $\sigma_2=1.3$, $\sigma_3=0.001$, 
$\lambda_1=\lambda_2=\lambda_3=0.2$, 
$\gamma_1=2.43$, $\gamma_2=1.9$, $\gamma_3=2.46$, $\gamma_4=1.72$,
$\gamma_5=3.11$, $\gamma_6=2.77$, $\mu_1=\mu_2=\mu_2=1.4$, 
$p=1.65$, $q=1.87$, and $\omega=\beta=0.8$.

Figure~\ref{fig:state_response} (a-c) presents the closed-loop state responses of the stabilized Chua chaotic system \eqref{system}.
Before the controller is activated, the trajectories retain the oscillatory behavior of the uncontrolled chaotic system. 
Once the control action is applied at $t=0.1$\,s, $x_1(t)$, $x_2(t)$, and $x_3(t)$ are rapidly driven toward the origin and reach a small neighborhood of the equilibrium within a finite time. 
The subsequent responses remain close to zero, demonstrating successful suppression of the chaotic dynamics and the finite-time stabilization capability of the proposed controller.

\begin{figure}[H]
\centering
\includegraphics[width=0.9\textwidth]{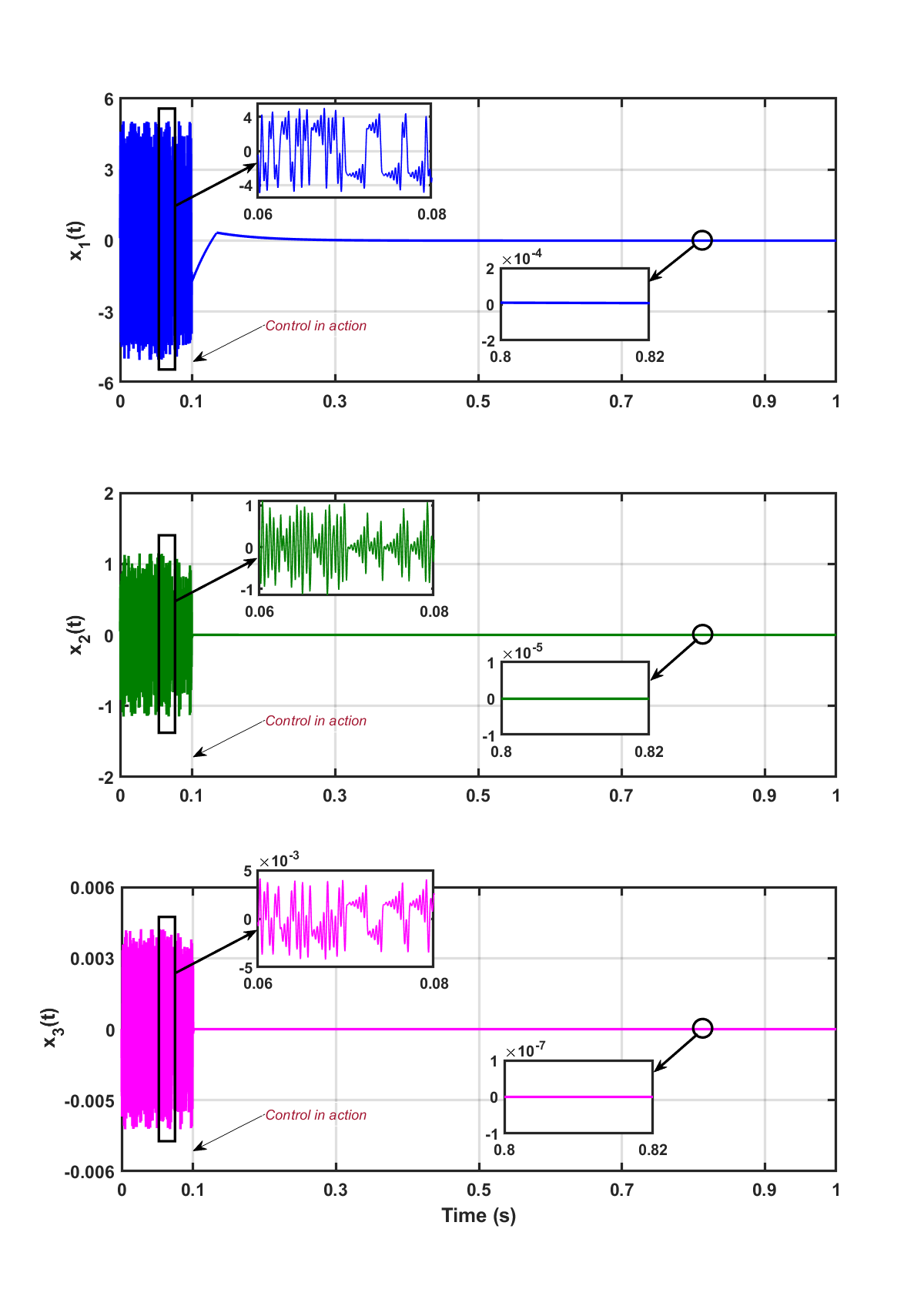}
\caption{Closed-loop evolution of the uncertain Chua system under the proposed finite-time control scheme: 
(a) $x_1(t)$; (b) $x_2(t)$; and (c) $x_3(t)$.}
\label{fig:state_response}
\end{figure}

Moreover, the reaching behaviour of the sliding variables is illustrated in Figure~\ref{fig:sliding_response} (a-c). 
Following controller activation, $s_1(t)$, $s_2(t)$, and $s_3(t)$ rapidly decay toward zero and reach the sliding manifold within a short finite-time interval.
Their Euclidean norm, $\|\mathbf{s}(t)\|_2=\sqrt{s_1^2(t)+s_2^2(t)+s_3^2(t)}$, provides a compact representation of this behavior. 
As observed in Figure~\ref{fig:sliding_response} (d), the norm rapidly approaches zero and remains close to the origin thereafter. 
These responses are consistent with the finite-time reaching property established in the theoretical analysis.

\begin{figure}[H]
\centering
\includegraphics[width=0.9\textwidth]{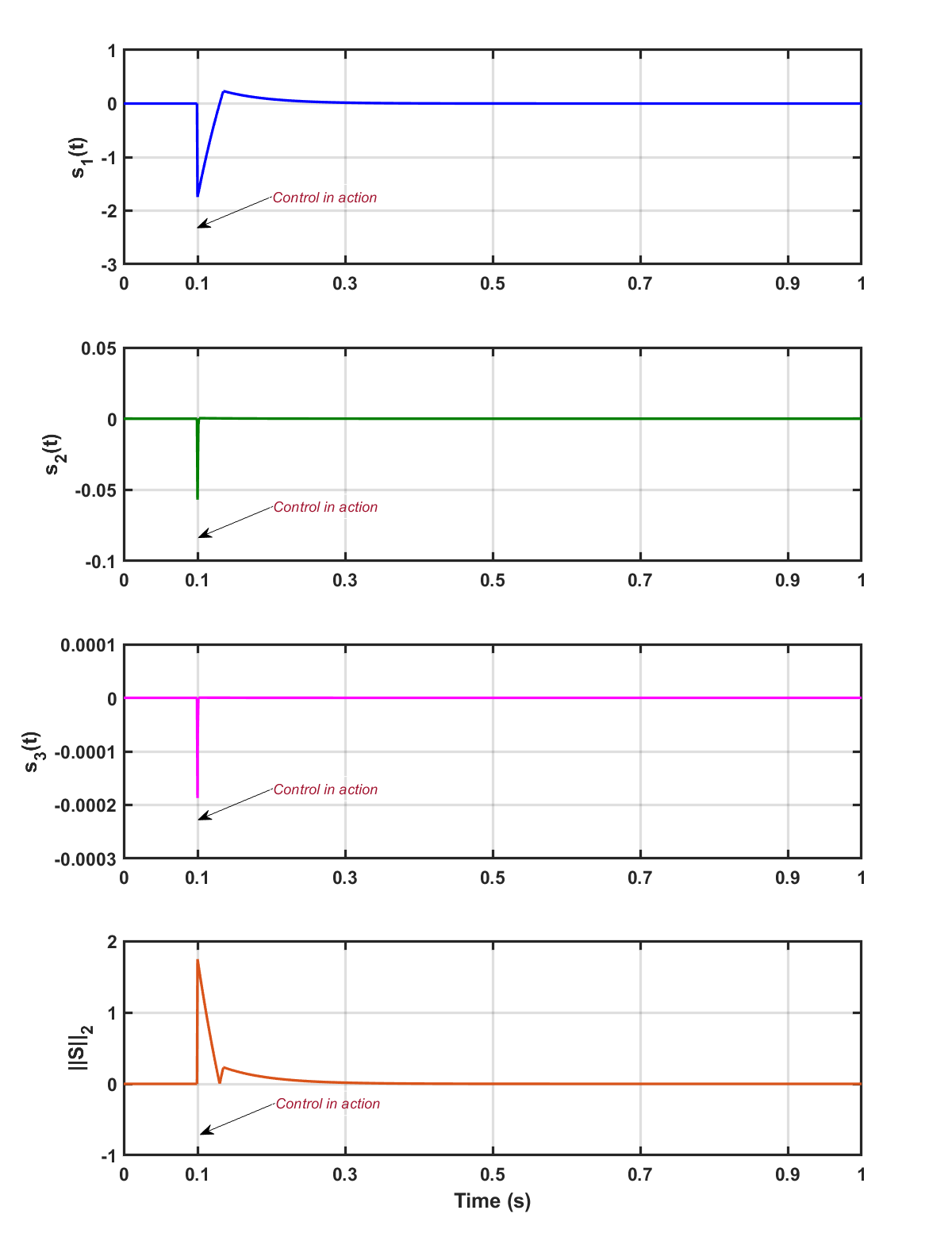}
\caption{Finite-time reaching characteristics of the proposed sliding-mode
scheme: (a) $s_1(t)$; (b) $s_2(t)$; (c) $s_3(t)$; and
(d) $\|\mathbf{s}(t)\|_2$.}
\label{fig:sliding_response}
\end{figure}

Furthermore, Figure~\ref{fig:control_response} (a-c) depicts the control signals during the stabilization process. 
Immediately after activation, the proposed controller provides the control action required to suppress the chaotic motion while respecting the prescribed saturation limits. 
The control signals remain bounded by these limits during the transient response and decrease rapidly as the states approach the equilibrium. 
Their Euclidean norm, $\|\mathbf{u}(t)\|_2=\sqrt{u_1^2(t)+u_2^2(t)+u_3^2(t)}$, further shows the reduction in the required control effort after stabilization.
As seen in Figure~\ref{fig:control_response}(d), only a small control action is required once the finite-time transient has elapsed. 
Hence, the saturation mechanism effectively constrains the control amplitudes without preventing the rapid stabilization of the chaotic system.

\begin{figure}[H]
\centering
\includegraphics[width=0.9\textwidth]{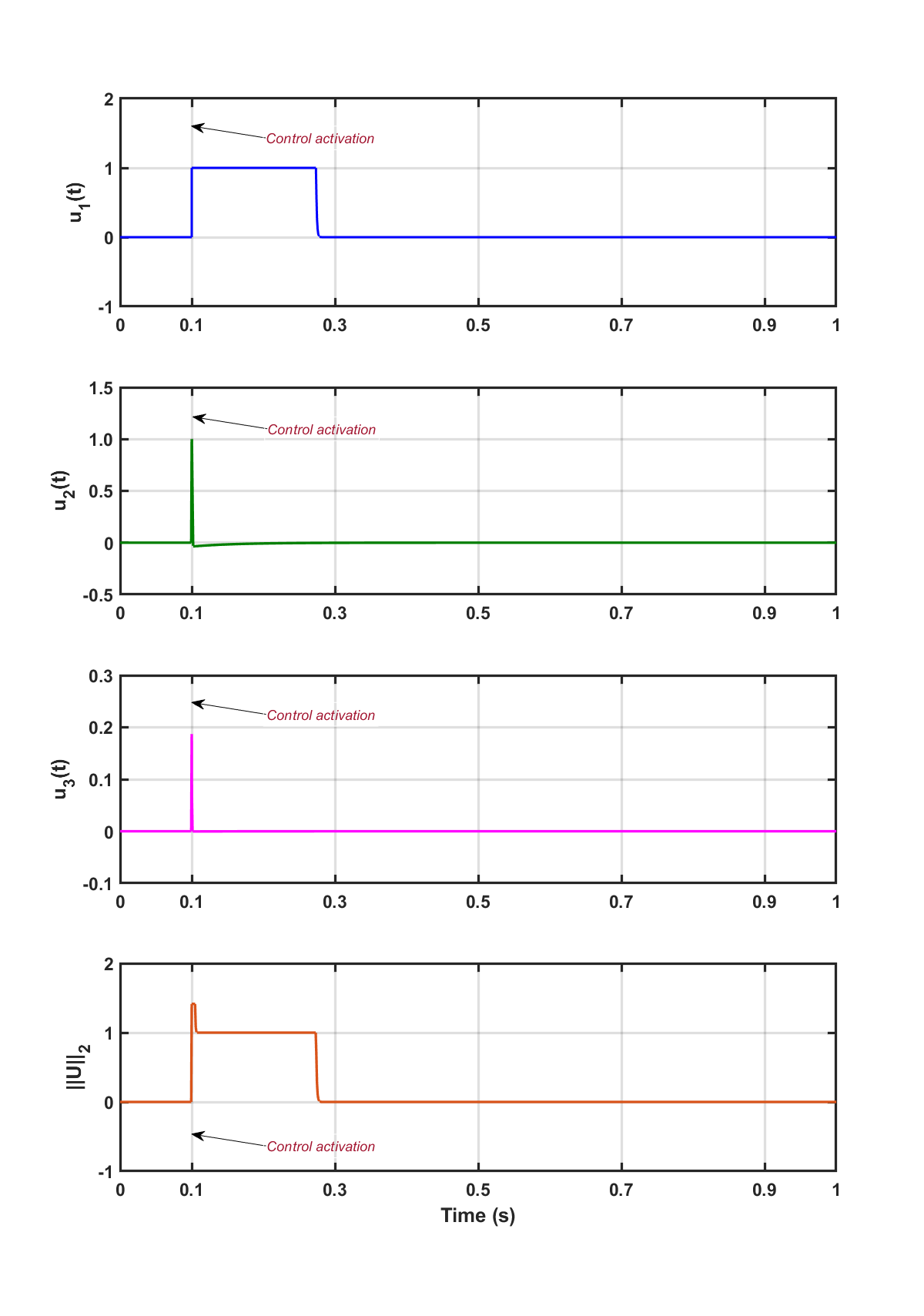}
\caption{Control activity during the finite-time stabilization process:
(a) $u_1(t)$; (b) $u_2(t)$; (c) $u_3(t)$; and
(d) $\|\mathbf{u}(t)\|_2$.}
\label{fig:control_response}
\end{figure}

Taken together, the state, sliding-variable, and control responses confirm that the proposed UFT-SMC \eqref{controller} scheme suppresses the chaotic dynamics within a finite time, rapidly establishes the sliding motion, and maintains the required control action within the prescribed saturation limits.

\subsection{Comparative and Performance Evaluation}
The control performance of the proposed UFT-SMC is compared with the method reported in \cite{WU20}, with the corresponding state responses shown in Figure~\ref{fig:comparison}. 
Both controllers suppress the chaotic oscillations after activation at $t=0.1$\,s. 
The UFT-SMC, however, regulates the states more rapidly, particularly during the transient period following control activation. 
The enlarged regions also reveal a clear difference near the equilibrium point, where the proposed controller maintains the states closer to zero with smaller residual errors than the existing method.

\begin{figure}[H]
\centering
\includegraphics[width=0.9\textwidth]{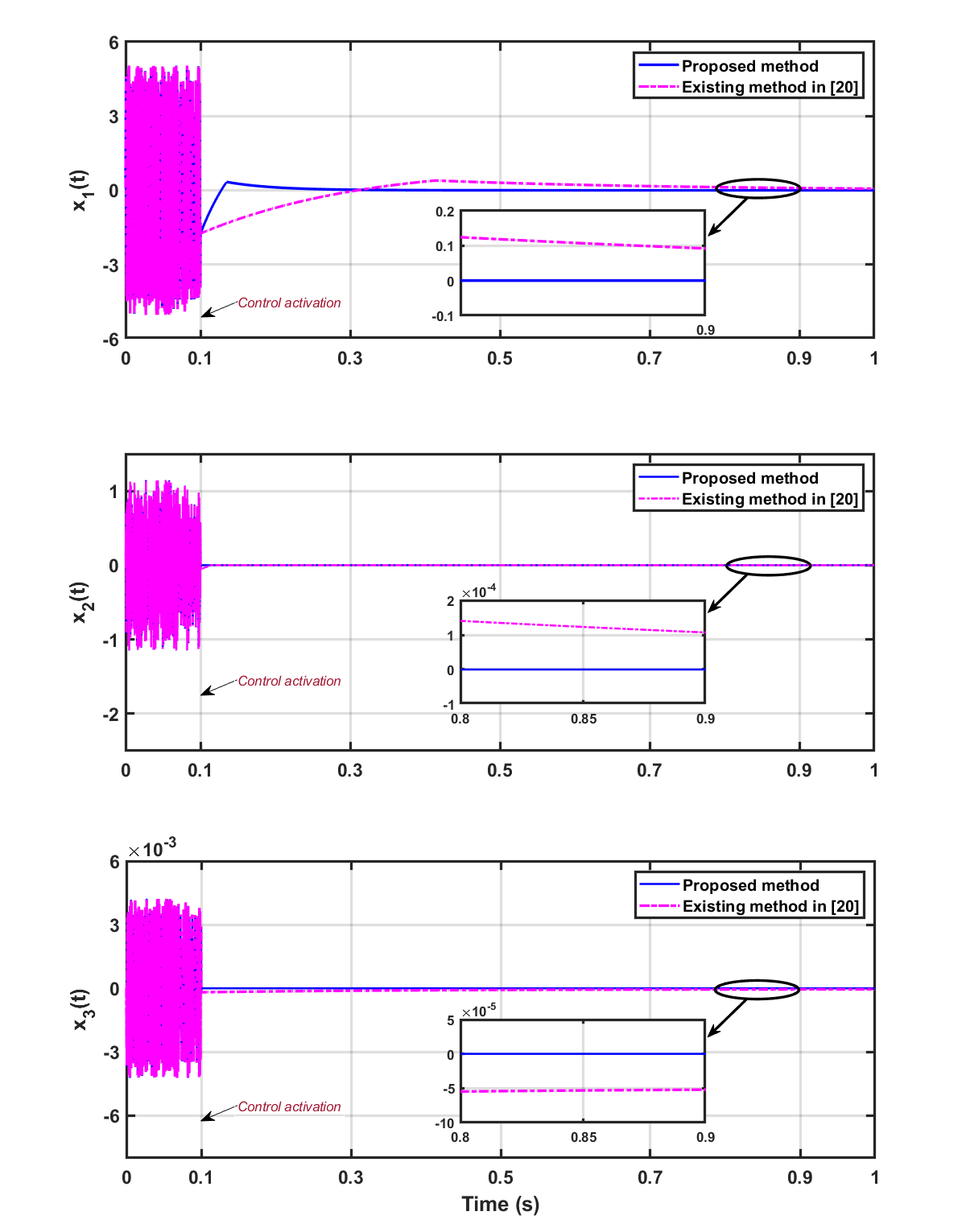}
\caption{Comparison of the state responses obtained using the proposed UFT-SMC and the existing method in \cite{WU20}.}
\label{fig:comparison}
\end{figure}

The comparison is further quantified using four performance indices:  the integral absolute error (IAE), 
integral squared error (ISE), 
integral time-weighted absolute error (ITAE),
and integral time-weighted squared error (ITSE), defined as
\begin{equation}\label{eq:performance_indices}
\begin{aligned}
  \mathrm{IAE} &= \int_0^T \sum_{i=1}^{3}|x_i(t)|\,dt, &
  \mathrm{ISE} &= \int_0^T \sum_{i=1}^{3}x_i^2(t)\,dt,\\
  \mathrm{ITAE} &= \int_0^T t\sum_{i=1}^{3}|x_i(t)|\,dt, &
  \mathrm{ITSE} &= \int_0^T t\sum_{i=1}^{3}x_i^2(t)\,dt.
\end{aligned}
\end{equation}
Figure~\ref{fig:performance}(a)--(b) compares the four performance indices obtained with the UFT-SMC and the controller in \cite{WU20}. 
The proposed controller achieves lower values in every case, with IAE and ISE decreasing from $0.2786$ to $0.1968$ and from $0.1920$ to $0.1308$, respectively. 
These reductions indicate that the UFT-SMC limits both the accumulated absolute state deviations and their squared magnitudes more effectively over the evaluation interval.

The \textit{relative improvement} is quantified as
\begin{equation*}
\mathrm{Improvement}(\%)=
\frac{J_{Ref. \text{\cite{WU20}}
}-J_{\mathrm{UFT\text{-}SMC}}}
     {J_{Ref. \text{\cite{WU20}}}}\times 100,
\end{equation*}
where $J$ denotes the corresponding performance index.
As summarized in Figure~\ref{fig:performance}(c), the
reductions in IAE, ISE, ITAE, and ITSE are
$29.36\,\%$, $31.88\,\%$, $44.56\,\%$, and $40.38\,\%$,
respectively. 
The larger relative gains in ITAE and ITSE are particularly relevant because these indices assign greater weight to deviations occurring at later times.
Taken together, the results support the superior regulation performance of the UFT-SMC in this comparison,  
with smaller accumulated state deviations and more effective suppression of errors persisting at later times.

\begin{figure}[htb]
    \centering
    \includegraphics[width=0.9\textwidth]{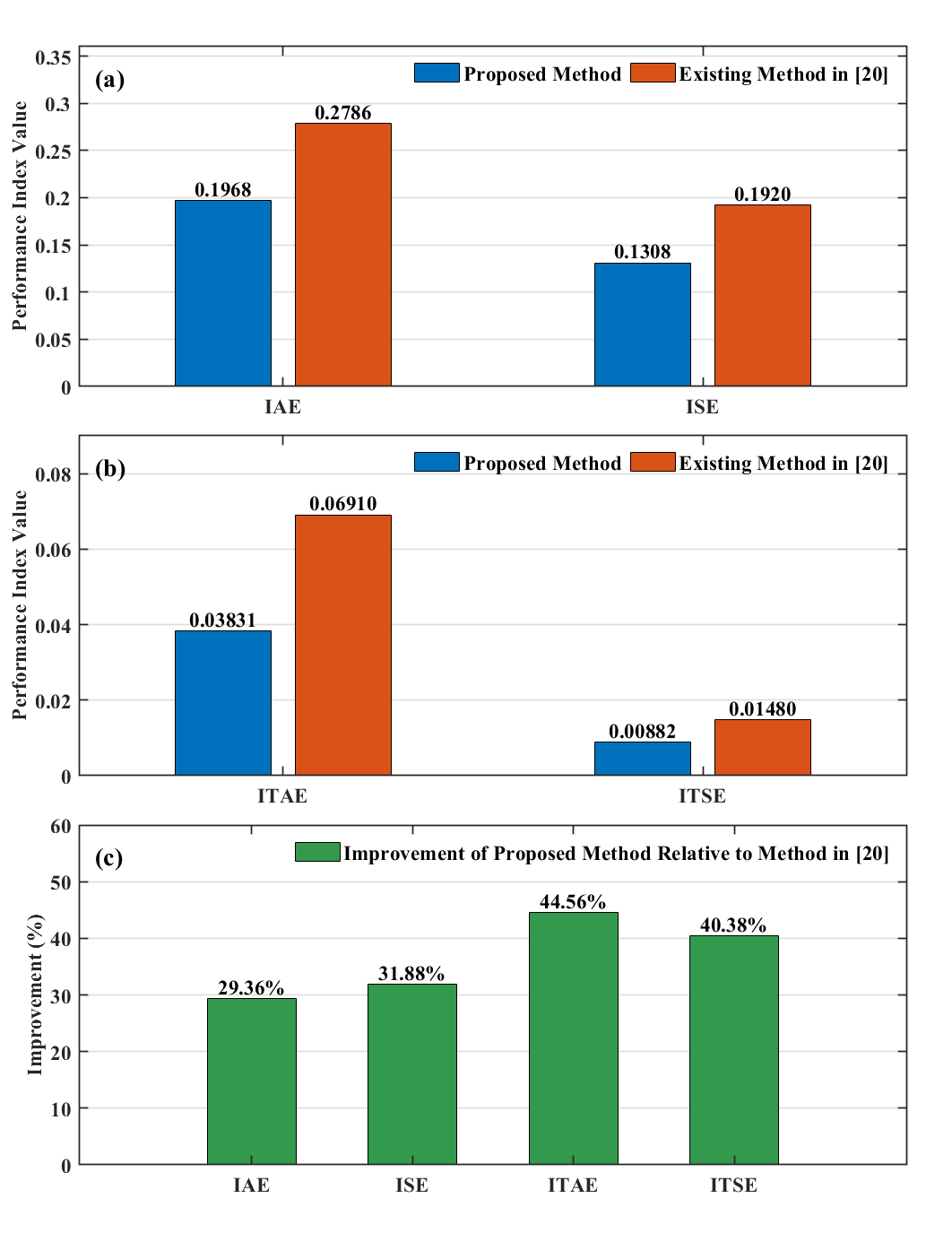}
    \caption{Performance comparison of the proposed UFNTSMC, and the existing method in \cite{WU20}: (a) IAE and ISE, (b) ITAE and ITSE, 
    and (c) percentage improvement relative to the existing method in \cite{WU20}.}
    \label{fig:performance}
\end{figure}
%

\section{Conclusion}\label{sec:7}
The Chua circuit remains a representative nonlinear system for studying chaotic dynamics and assessing control strategies under physically meaningful operating conditions. 
In this work, a graph-learning-assisted UFT-SMC was developed for its finite-time stabilization in the presence of input saturation, uncertainties, and external disturbances. 
The controller suppresses the chaotic oscillations and drives the circuit states to the desired equilibrium within a short transient period, while the sliding variables converge to zero and the control inputs remain within their prescribed bounds.
The graph-learning scheme was used to tune the controller parameters according to their influence on the closed-loop response, reducing the need for manual parameter adjustment. 
The resulting design showed faster regulation, smaller residual errors, and better integral performance indices than the existing method considered for comparison. 
These results show that graph-based learning can be effectively incorporated into nonlinear control design as a practical parameter-tuning tool without altering the underlying control structure. 
Future work will consider experimental validation of the proposed approach and its application to other nonlinear and fractional-order dynamical systems.

\clearpage
\section*{Declaration of Interests}
The authors declare that they have no known competing financial interests or personal relationships that could have appeared to influence the work reported in this paper.

\section*{Declaration on the Use of Artificial Intelligence}

The authors used ChatGPT (OpenAI) for language editing and improving the clarity of the manuscript. 
The authors are fully responsible for the scientific content.
\printbibliography

@book{rasband2015chaotic,
  title={Chaotic Dynamics of Nonlinear Systems},
  author={Rasband, S.N.},
  isbn={9780486805771},
  series={Dover Books on Physics},
  url={},
  year={2015},
  publisher={Dover Publications}
}

@article{jaiswal2024,
  title={Cluster synchronization of networks of chaotic systems: A comprehensive review of theory and applications},
  author={Jaiswal, Akshay Kumar and Sharma, Bharat Bhushan},
  journal={IETE Technical Review},
  volume={41},
  number={5},
  pages={537--556},
  doi={10.1080/02564602.2024.2341076},
  year={2024},
  publisher={Taylor \& Francis}
}

@article{kennedy1992a,
  author = {Kennedy, M. P.},
  title = {Robust OP Amp Realization of Chua's Circuit},
  format = {article},
  journal = {Frequenz},
  volume = {46},
  number = {3-4},
  pages = {66-80},
  year = {1992},
  issn = {21916349, 00161136},
  doi = {10.1515/FREQ.1992.46.3-4.66}
}

@article{chua1994a,
author = {Chua, Leon O.},
title = {Chua's circuit 10 years later},
journal = {International Journal of Circuit Theory and Applications},
volume = {22},
number = {4},
pages = {279-305},
doi = {10.1002/cta.4490220404},
url = {},
eprint = {},
year = {1994}
}

@ARTICLE{Truong,
  author={Truong, Hoai Vu Anh and Nam, Seokho and Kim, Sejin and Kim, Youngwan and Chung, Wan Kyun},
  journal={IEEE Transactions on Automation Science and Engineering}, 
  title={Backstepping-Sliding-Mode-Based Neural Network Control for Electro-Hydraulic Actuator Subject to Completely Unknown System Dynamics}, 
  year={2024},
  volume={21},
  number={4},
  pages={6202-6216},
  doi={10.1109/TASE.2023.3323148}}

@ARTICLE{Agha1,
  author={Aghababa, Mohammad Pourmahmood},
  journal={IEEE/ASME Transactions on Mechatronics}, 
  title={Sliding-Mode Control Composite With Disturbance Observer for Tracking Control of Mismatched Uncertain nDoF Nonlinear Systems}, 
  year={2018},
  volume={23},
  number={1},
  pages={482-490},
  doi={10.1109/TMECH.2017.2781624}}

@article{agha2,
  title={Stabilisation of unknown fractional-order chaotic systems: an adaptive switching control strategy with application to power systems},
  author={Aghababa, Mohammad Pourmahmood and Haghighi, Ahmad Reza and Roohi, Majid},
  journal={IET generation, transmission \& distribution},
  volume={9},
  number={14},
  pages={1883--1893},
  year={2015},
  publisher={Wiley Online Library}
}

@article{Azioune,
author = {Azioune, Mourad and Abdelouahab, Mohammed Salah and Lozi, Ren{\'e}},
title = {Bifurcation Analysis of a Cournot Triopoly Game with Bounded Rationality and Chaos Control Via the OGY Method},
journal = {International Journal of Bifurcation and Chaos},
volume = {35},
number = {08},
pages = {2530019},
year = {2025},
doi = {10.1142/S0218127425300198},
URL = {},
eprint = {}
}

@book{SMC,
  title={Sliding mode control and observation},
  author={Shtessel, Yuri and Edwards, Christopher and Fridman, Leonid and Levant, Arie and others},
  volume={10},
  year={2014},
  publisher={Springer}
}

@ARTICLE{10403641,
  author={Zhang, Zhang and Yang, Xiaodong and Wang, Weiyu and Chen, Kaiwen and Cheung, Norbert Chow and Pan, Jianfei},
  journal={IEEE Transactions on Industrial Electronics}, 
  title={Enhanced Sliding Mode Control for PMSM Speed Drive Systems Using a Novel Adaptive Sliding Mode Reaching Law Based on Exponential Function}, 
  year={2024},
  volume={71},
  number={10},
  pages={11978-11988},
  doi={10.1109/TIE.2023.3347845}}

@article{gedef,
  title={An improved trajectory tracking control of quadcopter using a novel sliding mode control with fuzzy PID surface},
  author={Gedefaw, Elisabeth Andarge and Abdissa, Chala Merga and Lemma, Lebsework Negash},
  journal={PloS one},
  volume={19},
  number={11},
  pages={e0308997},
  year={2024},
  publisher={Public Library of Science San Francisco, CA USA},
  doi={10.1371/journal.pone.0308997}
}

@article{SAK,
title = {Adaptive sliding mode control of multilevel grid-connected inverters using reinforcement learning for enhanced LVRT performance},
journal = {Electric Power Systems Research},
volume = {253},
pages = {112428},
year = {2026},
issn = {0378-7796},
doi = {10.1016/j.epsr.2025.112428},
url = {},
author = {Tamil Selvi Sakthivel and Prakash Ragupathy and Nagarajan Chinnadurai},
}

@article{FENG,
title = {A new adaptive sliding mode controller based on the RBF neural network for an electro-hydraulic servo system},
journal = {ISA Transactions},
volume = {129},
pages = {472-484},
year = {2022},
issn = {0019-0578},
doi = {10.1016/j.isatra.2021.12.044},
url = {},
author = {Hao Feng and Qianyu Song and Shoulei Ma and Wei Ma and Chenbo Yin and Donghui Cao and Hongfu Yu}
}

@article{meny,
  title={Optimized backstepping fuzzy sliding mode controller for trajectory tracking of mobile manipulator},
  author={Menyechel Eneyew, Geta and Ayalew Asfaw, Wubshet and Merga Abdissa, Chala},
  journal={Engineering Reports},
  volume={7},
  number={7},
  pages={e70269},
  year={2025},
  publisher={Wiley Online Library}
}

@article{ram,
  title={Fuzzy logic-enhanced sliding mode control of Belousov--Zhabotinsky reaction dynamics},
  author={Ramezani Pargami, Negin and Ghorbanian, Sohrab Ali and Fatoorehchi, Hooman},
  journal={The Canadian Journal of Chemical Engineering},
  volume={104},
  number={2},
  pages={781--795},
  year={2026},
  publisher={Wiley Online Library}
}

@article{lan,
  title={Data-Driven Sliding Mode Control for Partially Unknown Nonlinear Systems},
  author={Lan, Jianglin and Zhao, Xianxian and Sun, Congcong},
  journal={International Journal of Robust and Nonlinear Control},
  volume={36},
  number={2},
  pages={629--638},
  year={2026},
  publisher={Wiley Online Library}
}

@article{curran1997absolute,
  title={Absolute stability theory and the synchronization problem},
  author={Curran, PF and Chua, LO},
  journal={International Journal of Bifurcation and Chaos},
  volume={7},
  number={06},
  pages={1375--1382},
  year={1997},
  publisher={World Scientific}
}

@article{fradkov2005control,
  title={Control of chaos: Methods and applications in engineering},
  author={Fradkov, Alexander L and Evans, Robin J},
  journal={Annual Reviews in Control},
  volume={29},
  number={1},
  pages={33--56},
  year={2005},
  publisher={Elsevier}
}

@article{DRL_reference,

title = {Multi-agent reinforcement learning with graph convolutional neural networks for optimal bidding strategies of generation units in electricity markets},
journal = {Expert Systems with Applications},
volume = {225},
pages = {120010},
year = {2023},
issn = {0957-4174},
doi = {10.1016/j.eswa.2023.120010},
url = {},
author = {Pegah Rokhforoz and Mina Montazeri and Olga Fink},

}

@article{WU20,
title = {Fixed-time composite neural learning control of state-constrained nonlinear uncertain systems},
journal = {Journal of the Franklin Institute},
volume = {358},
number = {16},
pages = {8138-8168},
year = {2021},
issn = {0016-0032},
doi = {10.1016/j.jfranklin.2021.08.013},
url = {},
author = {Zhonghua Wu and Shuaipeng Zheng and Yunji Zhao and Bojun Liu and Junkang Ni},

}

\end{document}